\documentclass[a4paper,11pt]{article}
\usepackage[T2A]{fontenc}
\usepackage[utf8]{inputenc}
\usepackage[russian,english]{babel}

\usepackage{latexsym}
\usepackage{geometry}
\usepackage{graphicx}
\usepackage{amsfonts}
\usepackage{amsthm}
\usepackage{amsmath}
\usepackage{amssymb}
\usepackage{xcolor}
\usepackage{mathrsfs}
\usepackage{bookmark}
\usepackage{hyperref}

\usepackage{accents}
\usepackage{indentfirst}
\usepackage{enumerate}
\usepackage{float}
\usepackage{subcaption}
\usepackage{mathtools}
\usepackage{csquotes}
\usepackage{setspace}
\usepackage[noadjust]{cite}
\usepackage{array}

\newtheorem{theorem}{Theorem}[section]
\newtheorem{corollary}[theorem]{Corollary}
\newtheorem{proposition}[theorem]{Proposition}
\newtheorem{lemma}[theorem]{Lemma}
\theoremstyle{definition}
\newtheorem{definition}[theorem]{Definition}

\newtheorem{notation}[theorem]{Notation}
\newtheorem{example}[theorem]{Example}
\newtheorem{remark}[theorem]{Remark}

\numberwithin{equation}{section}
\allowdisplaybreaks

\let\phi\varphi

\def\A{\mathcal{A}}

\def\Main{\mathcal{M}}

\def\Zorn{{\rm Zorn}}

\renewcommand{\Im}{\mathop{\mathfrak{Im}}\nolimits}

\DeclareMathOperator{\chrs}{char}

\DeclareMathOperator{\spn}{span}
\DeclareMathOperator{\Anc}{Anc}

\newcommand\til[1]{\widetilde{\phantom{,\mkern-4mu} #1 \phantom{,\mkern-4mu}}\mkern-1mu}

\providecommand{\keywords}[1]{\textbf{Keywords:} #1}
\providecommand{\msc}[1]{\textbf{MSC 2020:} #1}

\newcommand\freefootnote[1]{%
    \bgroup
    \renewcommand\thefootnote{\fnsymbol{footnote}}%
    \renewcommand\thempfootnote{\fnsymbol{mpfootnote}}%
    \footnotetext[0]{#1}%
    \egroup
}

\begin{document}

\title{Diameter of the commutativity graph of the real sedenions}
\author{
Svetlana Zhilina$^{a}$
}
\date{\small \em
$^a$Department of Mathematics and Mechanics,\\ Lomonosov Moscow State University,\\ Moscow, 119991, Russia
}

\maketitle

\begin{abstract}
The commutativity graph of the real sedenion algebra is considered. It is shown that those elements whose imaginary part is not a zero divisor correspond to isolated vertices of this graph. All other elements form a connected component whose diameter equals $3$.
\end{abstract}

\keywords{Cayley--Dickson algebras, sedenions, relation graphs, commutativity graph, alternative elements.}

\msc{05C25, 17A20, 17D05}

\freefootnote{This work was supported by by the Russian Science Foundation (project No.~22-11-00052).}

\freefootnote{Email address: \texttt{s.a.zhilina@gmail.com}}

\section{Introduction}

One of the ways to visualize an arbitrary binary algebraic relation $R$ is to define the corresponding graph. Its vertices represent elements or their equivalence classes in an algebraic structure under consideration, and there is an edge from $x$ to $y$ if and only if $xRy$. The most popular relation graphs of various rings and algebras are commutativity, orthogonality, and zero divisor graphs.

Studying relation graphs appears to be particularly interesting in the case of nonassociative algebras. Cayley--Dickson algebras~$\A_n$, $n \in \mathbb{N}_0$, over an arbitrary field~$\mathbb{F}$, $\chrs \mathbb{F} \neq 2$, form an important class of such algebras. They are a family of $2^n$-dimensional algebras which are defined inductively: $\A_0 = \mathbb{F}$, and at each step the algebra $\A_{n+1}$ is obtained from~$\A_n$ by applying the Cayley--Dickson construction with some nonzero parameter $\gamma_n \in \mathbb{F}$. The algebra $\A_{n+1}$ consists of ordered pairs of elements from~$\A_n$, i.e., its elements have the form $(a,b) \in \A_n \times \A_n$. For $n = 3$ the algebra~$\A_n$ becomes nonassociative, and for $n \geq 4$ it is not even alternative, that is, the identities $a(ab) = (aa)b$ and $(ba)a = b(aa)$ are not satisfied in~$\A_n$.

An important particular case of Cayley--Dickson algebras are the so called real algebras of the main sequence, which we denote by~$\Main_n$. In these algebras we have $\mathbb{F} = \mathbb{R}$, and all Cayley--Dickson parameters are equal to~$-1$. Every Cayley--Dickson algebra has a norm defined in a natural way, and in the case of real algebras of the main sequence this norm is anisotropic, i.e., $n(a) \neq 0$ for all $a \in \Main_n \setminus \{ 0 \}$. It then easily follows that~$\Main_n$ cannot have zero divisors whenever it is alternative. Therefore, the first algebra of the main sequence with zero divisors is the sedenion algebra $\mathbb{S} = \Main_4$.

Among the authors who studied zero divisors in real algebras of the main sequence, we mention Moreno~\cite{moreno, moreno_alternative, moreno_constructing} and Biss, Dugger, and Isaksen~\cite{biss, biss2}. Particularly, in~\cite{biss, biss2} the dimensions of their annihilators were completely described, and the zero divisors whose annihilators have the largest possible dimension were classified. Then Pixton~\cite{pixton} obtained a similar result on the dimension of alternators in these algebras. It should be noted that Moreno was the first to study doubly alternative elements in real algebras of the main sequence, that is, the elements whose both components are alternative in the previous algebra of the sequence. He established several important properties of doubly alternative zero divisors, see~\cite[pp.~25--27]{moreno}.

Properties of relation graphs of Cayley--Dickson algebras have been studied in the author's papers~\cite{our_orthographs1,our_split-sedenions,our_split-algebras,our_doubly-alternative,our_sedenions}. In~\cite{our_orthographs1} zero divisors whose components satisfy additional conditions on their norm and alternativity have been studied in an arbitrary real Cayley--Dickson algebra. It has been shown that they form hexagonal patterns in orthogonality and zero divisor graphs, see~\cite[Corollary~3.7]{our_orthographs1}. In the algebras of the main sequence, each hexagon can be extended to the so-called double hexagon, see~\cite[Description~4.8]{our_orthographs1}, and the multiplication table of its vertices has a block structure, cf.~\cite[Theorem~4.11]{our_orthographs1}.

In~\cite{our_doubly-alternative} these results have been extended to the case of arbitrary Cayley--Dickson algebras over a field~$\mathbb{F}$, $\chrs \mathbb{F} \neq 2$, and in the general case algebras of the main sequence correspond to Cayley--Dickson algebras with anisotropic norm. Besides, most of the results of the papers~\cite{moreno,biss} have also been generalized to Cayley--Dickson algebras with anisotropic norm. So, according to~\cite[Theorem~6.11]{our_doubly-alternative}, if $n \geq 2$ and~$\A_n$ is a Cayley--Dickson algebra with anisotropic norm, then the dimension of an annihilator of any element from~$\A_n$ is divisible by four. The proof of this and several other facts is based on the properties of operators of left and right multiplication by a fixed element. In particular, in a Cayley--Dickson algebra~$\A_n$ with anisotropic norm, left multiplication by an element whose both components have zero trace corresponds to a conjugate-linear mapping which is skew-Hermitian with respect to a certain Hermitian inner product on~$\A_n$. It then follows that the codimension (and hence the dimension also) of its kernel is divisible by four.

Another important subclass of Cayley--Dickson algebras are Cayley--Dickson split-algebras. They have isotropic norm, so there appear zero divisors even in the case when the algebra~$\A_n$ is alternative. It is well known (\cite[pp.~160 and~166]{mccrimmon}) that the split-quaternions $\A_2$ over an arbitrary field~$\mathbb{F}$, $\chrs \mathbb{F} \neq 2$, are isomorphic to the algebra $M_2(\mathbb{F})$ of square matrices of order~$2$ over~$\mathbb{F}$, and the split-octonions $\A_3$ are isomorphic to Zorn vector-matrix algebra whose definition is given in Subsection~\ref{subsection:octonions-sedenions}. With the help of these isomorphisms, operating with the elements of these algebras can be simplified significantly. In particular, Lopatin and Zubkov used them to find the orbits of all elements and of all pairs of elements, in the split-octonions over an infinite field~$\mathbb{F}$, with respect to the action of the automorphism group, see~\cite[Proposition~3.4 and Theorem~4.1]{lopatin}. In~\cite{our_split-algebras,our_split-sedenions} commutativity, orthogonality, and zero divisor graphs of low-dimensional real Cayley--Dickson split-algebras (with $n \leq 4$) are described in detail.

The aim of the current paper is to study connected components of the commutativity graph of the real sedenions~$\mathbb{S}$, denoted by $\Gamma_C(\mathbb{S})$. Thus we continue the research of relation graphs of the sedenion algebra which began in~\cite{our_sedenions}. One of the main results of the previous paper is the complete description of connected components of the orthogonality graph $\Gamma_O(\mathbb{S})$. They can be mapped bijectively to lines in the imaginary part of the octonions, cf.~\cite[Theorem~4.15]{our_sedenions}. For each of the connected components, its vertex set is described explicitly, and its diameter is proved to be equal to three, see~\cite[Corollary~4.10 and Theorem~4.11]{our_sedenions}.

The structure of this paper is as follows: In Section~\ref{section:definitions} we introduce main definitions and notations which are used throughout the text. In particular, we describe the Cayley--Dickson process in detail in Subsection~\ref{subsection:A_n} and mention some of the properties of Cayley--Dickson algebras in Subsection~\ref{subsection:A_n-properties}. In Subsection~\ref{subsection:main-sequence} we recall main results of the paper~\cite{our_doubly-alternative} on orthogonality graphs of Cayley--Dickson algebras with anisotropic norm. Then in Subsection~\ref{subsection:octonions-sedenions} we give explicit definitions of real octonions and sedenions and consider matrix representations of octonions.

Finally, in Section~\ref{section:commutativity-graph} we consider the commutativity graph of~$\mathbb{S}$. The key role is played by Lemma~\ref{lemma:A_n-commutativity-through-orthogonality} which describes a relationship between the centralizer and the orthogonalizer of an arbitrary element. It follows immediately that those elements whose imaginary part is not a zero divisor correspond to isolated vertices in~$\Gamma_C(\mathbb{S})$. All other elements form the subgraph~$\Gamma_C^Z(\mathbb{S})$, and Theorem~\ref{theorem:diameter-3} shows that it is a connected component whose diameter equals $3$. The proof uses Lemma~\ref{lemma:length-two-path} 
which provides a criterion for the distance between two arbitrary elements in $\Gamma_C^Z(\mathbb{S})$ to be at most two. Note that Theorem~\ref{theorem:diameter-3} contains an explicit algorithm for constructing a path of length at most three between an arbitrary pair of elements, which can be reduced to solving a system of four linear equations in five variables.

\section{Main definitions and notations} \label{section:definitions}

\subsection{Algebraic relations and their graphs} \label{subsection:definitions}

Let~$\mathbb{F}$ be an arbitrary field and $(\A, +, \cdot)$ be an algebra over~$\mathbb{F}$, possibly noncommutative and nonassociative. We say that $a, b \in \A$ {\em anticommute} if $ab + ba = 0$, and $a, b \in \A$ are {\em orthogonal} if $ab = ba = 0$. We denote the set of zero divisors (left, right, or two-sided) in~$\A$ by $Z(\A)$, the set of two-sided zero divisors in~$\A$ by $Z_{LR}(\A)$, and the (commutative) center of~$\A$ by $C_{\A}$.

\begin{definition} \label{definition:subspaces}
Let $a$ be an arbitrary element of~$\A$.
\begin{itemize}
    \item
    {\em The centralizer} of $a$ is $C_\A(a) = \big\{ b \in \A \: | \: ab=ba \big\}$, i.e., the set of all elements in~$\A$ which commute with $a$.
    \item
    {\em The anticentralizer} of $a$ is $\Anc_\A(a) = \big\{ b \in \A \: | \:  ab+ba=0 \big\}$, i.e., the set of all elements in~$\A$ which anticommute with $a$.
    \item
    {\em The orthogonalizer} of $a$ is $O_\A(a)=\big\{ b \in \A \: | \;  ab=ba=0 \big\}$, i.e., the set of all elements in~$\A$ which are orthogonal to $a$.
\end{itemize}
\end{definition}

It is clear that $C_\A(a)$, $\Anc_\A(a)$, and $O_\A(a)$ are linear spaces over~$\mathbb{F}$.

\begin{notation}
For any subset $X$ of a linear space $W$ over~$\mathbb{F}$ we denote the set of lines passing through nonzero elements of~$X$ by
$$
\mathbb{P}(X) = \{ [x] = \mathbb{F} x \; | \; x \in X \setminus \{ 0 \} \}.
$$
\end{notation}

We now introduce some relation graphs which are to be studied in this paper.

\begin{definition} \label{definition:graphs}
Let~$\A$ be an arbitrary algebra. We define the following relation graphs of~$\A$:
\begin{itemize}
\item 
{\em The commutativity graph} $\Gamma_C(\A)$: its vertices are elements of 
$$
\mathbb{P}(\A/C_\A) = \{ [a + C_{\A}] = \mathbb{F}a + C_{\A} \; | \; a \in \A \setminus C_\A \},
$$
and distinct vertices $[a + C_{\A}]$ and $[b + C_{\A}]$ are adjacent if and only if $ab = ba$.
\item 
{\em The orthogonality graph} $\Gamma_O(\A)$: its vertices are elements of $\mathbb{P}(Z_{LR}(\A))$, and distinct vertices $[a]$ and $[b]$ are adjacent if and only if $ab = ba = 0$.
\end{itemize}
\end{definition}

Note that the edges of $\Gamma_C(\A)$ and $\Gamma_O(\A)$ are well-defined. When speaking of the vertices of these graphs, we will not distinguish between a nonzero element $a$ and a line $[a] = \mathbb{F} a$ passing through it. We also denote $\spn(a_1, \dots, a_k) = \mathbb{F} a_1 + \dots + \mathbb{F} a_k$.

Recall that, in an undirected graph $\Gamma$, $d(x,y) = d_{\Gamma}(x,y)$ denotes {\em the distance} between two vertices $x$ and $y$, and $d(\Gamma) = \sup\limits_{x,y \in \Gamma} d(x,y)$ denotes {\em the diameter} of $\Gamma$.

\subsection{Constructing Cayley--Dickson algebras} \label{subsection:A_n}

We refer the reader to~\cite{mccrimmon,schafer} for auxiliary definitions and general properties of Cayley--Dickson algebras.

\begin{definition} \label{definition:cayley-dickson-algebras}
Let~$\A$ be an algebra over a field~$\mathbb{F}$ with an involution $a \mapsto \bar{a}$. The algebra $\A \{ \gamma \}$ produced by {\em the Cayley--Dickson process}, when applied to~$\A$ with the parameter $\gamma \in \mathbb{F}$, $\gamma \neq 0$, is defined as the set of ordered pairs of elements of~$\A$ with operations
\begin{align*}
\alpha(a,b)&=(\alpha a, \alpha b);\\
(a,b)+(c,d)&=(a+c,b+d);\\
(a,b)(c,d)&=(ac+\gamma \bar{d}b,da+b\bar{c})
\end{align*} 
and the involution
$$
\qquad (\overline{a,b})=(\bar{a},-b), \qquad a,b,c,d\in \A, \ \alpha \in \mathbb{F}.
$$
\end{definition}

\begin{remark}[{\cite[p.~435]{schafer}}]
If the algebra~$\A$ is unital, with $1_{\A}$ being its unity, then $(1_{\A}, 0)$ is the unity of $\A \{ \gamma \}$. If, moreover, the involution on~$\A$ is regular, that is, $a + \bar{a} \in \mathbb{F}1_{\A}$ and $a\bar{a} = \bar{a}a \in \mathbb{F}1_{\A}$ for all $a \in \A$, then the involution on $\A \{ \gamma \}$ is also regular.
\end{remark}

Henceforth we assume that $\chrs \mathbb{F} \neq 2$. We now define an arbitrary Cayley--Dickson algebra which is determined by the set of its parameters.

\begin{definition} \label{definition:A_n}
For every integer $n \geq 0$ and nonzero numbers $\gamma_0, \dots, \gamma_{n-1} \in \mathbb{F}$ we define the Cayley--Dickson algebra $\A_n = \A_n \{ \gamma_0, \dots, \gamma_{n-1} \}$ inductively:
\begin{enumerate} [(1)]
    \item $\A_0 = \mathbb{F}$;
    \item $\A_{n+1} \{ \gamma_0, \dots, \gamma_n \}=(\A_n \{ \gamma_0, \dots, \gamma_{n-1} \}) \{ \gamma_n \}$.
\end{enumerate}
\end{definition}

For every integer $n \geq 0$ the structure~$\A_n$ in Definition~\ref{definition:A_n} is a unital $2^n$-dimensional algebra over~$\mathbb{F}$ with a regular involution.

\begin{definition} \label{definition:real-imaginary-part}
Let $a \in \A_n$. Its {\em trace} is $t(a) = a + \bar{a}$, its {\em imaginary part} is $\Im(a) = \frac{a - \bar{a}}{2}$, and its {\em norm} is $n(a) = a \bar{a} = \bar{a}a$. Since the involution on~$\A_n$ is regular, we have $t(a), n(a) \in \mathbb{F}$.
\end{definition}

\begin{proposition}[{\cite[p.~435]{schafer}}] \label{proposition:real-cayley-dickson-properties}
We can compute trace and norm of an element $(a,b) \in \A_{n+1}$ inductively by using the following equalities:
\begin{align*}
	t((a,b)) &= t(a),\\
	n((a,b)) &= n(a) - \gamma_n n(b).
\end{align*}
\end{proposition}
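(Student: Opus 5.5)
We verify both identities directly from the multiplication and involution rules of Definition~\ref{definition:cayley-dickson-algebras}, arguing by induction on $n$. The induction hypothesis is that $\A_n$ is unital with a regular involution, as recorded after Definition~\ref{definition:A_n}. So $t(a)$ and $n(a)$ are scalars in $\mathbb{F}1_{\A_n}$, and we identify $\mathbb{F}$ with $\mathbb{F}1_{\A_n}$ and with $\mathbb{F}(1_{\A_n},0)$ in $\A_{n+1}$.

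\textbf{Trace.} By the definition of the involution on $\A_{n+1}$, $(\overline{a,b}) = (\bar a,-b)$. Adding componentwise gives $t((a,b)) = (a,b)+(\bar a,-b) = (a+\bar a,0) = (t(a),0)$. Under the identification of scalars with multiples of the unity $(1_{\A_n},0)$, this is exactly $t(a)$.

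\textbf{Norm.} We compute $n((a,b)) = (a,b)(\bar a,-b)$ with the product formula, taking $c=\bar a$ and $d=-b$:
$$
(a,b)(\bar a,-b) = \bigl(a\bar a + \gamma_n \overline{(-b)}\,b,\; (-b)a + b\,\overline{\bar a}\bigr) = \bigl(n(a) - \gamma_n \bar b b,\; -ba + ba\bigr).
$$
Here we use that the involution on $\A_n$ is linear and involutive, so $\overline{-b}=-\bar b$ and $\overline{\bar a}=a$. The second component vanishes, and $\bar b b = n(b)$ is a scalar by regularity. Hence $n((a,b)) = (n(a)-\gamma_n n(b),0)$, which is the claimed formula.

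For completeness we also check $(\bar a,-b)(a,b)$, since the paper defines $n$ via both orders:
$$
(\bar a,-b)(a,b) = \bigl(\bar a a + \gamma_n \bar b(-b),\; b\bar a - b\bar a\bigr) = \bigl(n(a) - \gamma_n n(b),\, 0\bigr).
$$
This gives the same value, consistent with the regularity statement in the preceding Remark.

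There is no real obstacle here. The only points needing care are the bookkeeping of the identification of scalars with multiples of the unity, and the use of regularity of the involution on $\A_n$ (from the Remark and the induction) to ensure $\bar b b = b\bar b = n(b)$ is central and scalar.
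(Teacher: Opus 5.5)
Your direct verification is correct. Substituting $c=\bar a$, $d=-b$ into the product rule gives first component $a\bar a-\gamma_n\bar b b=n(a)-\gamma_n n(b)$ and second component $-ba+ba=0$, and the reverse order checks out the same way. The paper gives no proof of its own and simply cites Schafer, where the identities come from this same direct computation. Note that the induction is not really needed, since you only use the regularity of the involution on $\A_n$, which is already recorded in the Remark.
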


It follows from Proposition~\ref{proposition:real-cayley-dickson-properties} that the norm $n(\cdot)$ is a nondegenerate quadratic form on~$\A_n$.

\subsection{Some properties of Cayley--Dickson algebras} \label{subsection:A_n-properties}

Henceforth we assume that~$\A$ is an arbitrary algebra over a field~$\mathbb{F}$, and $\A_n = \A_n \{ \gamma_0, \dots, \gamma_{n-1} \}$ is an arbitrary Cayley--Dickson algebra over a field~$\mathbb{F}$, $\chrs \mathbb{F} \neq 2$.

\begin{proposition}[{\cite[p.~440]{schafer}}] \label{proposition:lambda-form}
Let $\langle a, b \rangle$ denote an~$\mathbb{F}$-valued symmetric bilinear form on~$\A_n$ associated with the quadratic form $n(a)$. Then $\langle a, a \rangle = n(a)$ and $2\langle a,b \rangle = a \bar{b} + b \bar{a} = \bar{a} b + \bar{b} a = t(a\bar{b})$ for all $a, b \in \A_n$. Besides, for any $a, b \in \A_n$ it holds that $\langle a, b \rangle = \langle \bar{a}, \bar{b} \rangle$ and $t(a) = 2 \langle a, e_0 \rangle$.
\end{proposition}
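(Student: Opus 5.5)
We must show that $\langle \cdot,\cdot\rangle$ is symmetric bilinear with $\langle a,a\rangle = n(a)$, and then derive the listed identities. The plan is to use only regularity of the involution, meaning $a+\bar a\in\mathbb{F}$ and $a\bar a=\bar a a\in\mathbb{F}$, together with linearity of the involution. We define the form by polarization,
$$
2\langle a,b\rangle = n(a+b)-n(a)-n(b).
$$
This is legitimate since $\chrs\mathbb{F}\neq 2$, and the result is symmetric and bilinear because $n(a)=a\bar a$ is a quadratic form: indeed $n(\alpha a)=\alpha^2 n(a)$, and the polarization is bilinear as computed below. Setting $b=a$ gives $2\langle a,a\rangle = n(2a)-2n(a) = 2n(a)$.

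Expanding $n(a+b)=(a+b)(\bar a+\bar b)$ by distributivity yields $2\langle a,b\rangle = a\bar b + b\bar a$, which is visibly bilinear. Expanding instead $n(a+b)=(\bar a+\bar b)(a+b)$, which uses $n(x)=\bar x x$, yields $2\langle a,b\rangle = \bar a b+\bar b a$.

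For the trace formula, we need $\overline{a\bar b}=b\bar a$, i.e.\ that the involution is an anti-automorphism. This holds in every $\A_n$ by a routine induction from the Cayley--Dickson formulas, and it is the only step needing more than regularity. Granting it, $t(a\bar b)=a\bar b+\overline{a\bar b}=a\bar b+b\bar a=2\langle a,b\rangle$.

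The remaining identities are short:
\begin{itemize}
\item Replacing $a,b$ by $\bar a,\bar b$ in the first expression gives $2\langle\bar a,\bar b\rangle=\bar a b+\bar b a$, which is the second expression, so $\langle a,b\rangle=\langle \bar a,\bar b\rangle$.
\item With $b=e_0=1$, we get $2\langle a,e_0\rangle = a+\bar a = t(a)$.
\end{itemize}
The main obstacle is the anti-automorphism property, which we verify by induction on $n$ from the multiplication and involution formulas of the Cayley--Dickson process.
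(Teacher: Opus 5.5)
Your proof is correct and is essentially the standard argument. The paper gives no proof of its own and only cites Schafer (p.~440), where these identities are obtained the same way: polarize $n(x)=x\bar x=\bar x x$ and use that $x\mapsto\bar x$ is an involutive anti-automorphism, so $\overline{xy}=\bar y\bar x$.

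You also use two further facts without stating them: $\bar{\bar x}=x$, in the step for $\langle\bar a,\bar b\rangle$ and in $\overline{a\bar b}=b\bar a$, and $\bar e_0=e_0$, in $t(a)=2\langle a,e_0\rangle$. Both follow from the same one-line induction on the Cayley--Dickson formulas that you use for the anti-automorphism property.
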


\begin{notation}
We denote $a \perp b$ if $a$ and $b$ are orthogonal with respect to $\langle \cdot, \cdot \rangle$, that is, $\langle a, b \rangle = 0$.
\end{notation}

The following lemma describes the anticentralizer of an arbitrary nonzero element in~$\A_n$ with zero trace.

\begin{lemma}[{\cite[Lemma~5.8]{our_anticomm}}] \label{lemma:A_n-anticomm}
Let $a \in \A_n \setminus \{ 0 \}$ and $t(a) = 0$. Then
\begin{equation*}
    \Anc_{\A_n}(a) = \left\{ b \in \A_n \; | \; t(b) = 0 \mbox{ and } \langle a,b \rangle = 0 \right\}.
\end{equation*}
\end{lemma}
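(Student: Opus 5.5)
We need to prove: for $a \neq 0$ with $t(a)=0$, $\Anc(a) = \{b : t(b)=0,\ \langle a,b\rangle = 0\}$.

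The plan is to compute the anticommutator $ab+ba$ directly from the regular-involution identities that hold in every Cayley--Dickson algebra $\A_n$, avoiding the inductive formula for the product. The basic fact needed is that every $x \in \A_n$ satisfies its quadratic relation $x^2 - t(x)x + n(x) = 0$. This follows from $x\bar{x} = n(x)$ together with $\bar{x} = t(x) - x$, the latter holding because the involution is regular and $t(x) \in \mathbb{F}$.

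Next linearize the quadratic relation. Apply it to $a+b$ and subtract the relations for $a$ and for $b$. Since $n(a+b) - n(a) - n(b) = 2\langle a,b\rangle$ by Proposition~\ref{proposition:lambda-form}, we get the identity
$$ab + ba = t(a)b + t(b)a - 2\langle a,b\rangle.$$
This is valid in all of $\A_n$, whether or not it is alternative, since only the degree-two relations were used. Imposing $t(a)=0$ reduces it to
$$ab+ba = t(b)a - 2\langle a,b\rangle e_0.$$

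Now read off both inclusions.

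If $t(b)=0$ and $\langle a,b\rangle = 0$, the right-hand side vanishes, so $b \in \Anc_{\A_n}(a)$.

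Conversely, suppose $ab+ba=0$. Then $t(b)a = 2\langle a,b\rangle e_0$. The element $a$ is nonzero and has zero trace, so it is not a scalar multiple of $e_0$: indeed $t(\lambda e_0) = 2\lambda$, which is nonzero for $\lambda \neq 0$ because $\chrs\mathbb{F}\neq 2$. Hence $a$ and $e_0$ are linearly independent, which forces $t(b)=0$ and then $\langle a,b\rangle=0$.

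The only step that needs real care is justifying $x^2 - t(x)x + n(x) = 0$, and hence the linearized identity, without any alternativity. This holds because the product $x\bar{x}$ involves only $x$ and the scalar $t(x)$, so that $x\bar{x} = x(t(x)-x) = t(x)x - x^2$ uses nothing beyond bilinearity and the fact that $e_0$ is the unity.
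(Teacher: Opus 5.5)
Your proof is correct and complete: the identity $ab+ba=t(a)b+t(b)a-2\langle a,b\rangle$ follows from linearizing the quadratic relation, which uses only regularity of the involution. The converse inclusion then follows from the linear independence of $a$ and $e_0$, which holds since $t(a)=0$, $a\neq 0$ and $\chrs\mathbb{F}\neq 2$. The paper does not reprove this lemma but quotes it from \cite[Lemma~5.8]{our_anticomm}, and your route is the natural one; you could get the identity slightly faster by substituting $\bar a=-a$ and $\bar b=t(b)-b$ into $2\langle a,b\rangle=\bar a b+\bar b a$ from Proposition~\ref{proposition:lambda-form}.
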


We now proceed to some concepts related to associativity. For $a,b,c \in \A$ we denote their associator by $[a,b,c] = (ab)c - a(bc)$, and their anti-associator by $\{ a,b,c \} = (ab)c + a(bc)$. An algebra~$\A$ is called {\em alternative} if for all $a,b \in \A$ the equalities $[a,a,b] = [b,a,a] = 0$ hold, and it is called {\em flexible} if for all $a,b \in \A$ it holds that $[a,b,a] = 0$. In a flexible algebra~$\A$, we have $[a,b,c]=-[c,b,a]$ for all $a,b,c \in \A$. It is well known that~$\A_n$ is alternative if and only if $n \leq 3$, however, all Cayley--Dickson algebras are flexible, see~\cite[p.~436, Theorem~1]{schafer}.

\begin{definition}[{\cite[p.~12, p.~15]{moreno_alternative}}]
Let $a, b \in \A_n$.
\begin{itemize}
    \item We say that $a$ {\em alternates} with $b$ if $[a,a,b] = 0$.
    \item If $a$ alternates with every $b \in \A_n$, then $a$ is {\em alternative}.
    \item We say that $a$ {\em alternates strongly} with $b$ if $[a,a,b] = 0$ and $[b,b,a] = 0$.
    \item If $a$ alternates strongly with every $b \in \A_n$, then $a$ is {\em strongly alternative}.
\end{itemize}
\end{definition}

\section{Algebras with anisotropic norm}

\subsection{Zero divisors in algebras with anisotropic norm} \label{subsection:main-sequence}

In this section we consider those Cayley--Dickson algebras over an arbitrary field~$\mathbb{F}$, $\chrs \mathbb{F} \neq 2$, such that the norm form is anisotropic, i.e., $n(a) = 0$ if and only if $a = 0$. We denote $\til{e}_0 = (0,e_0) \in \A_n$ and $\til{a} = a \til{e}_0$ for all $a \in \A_n$. Note that for $a = (x,y)$ we have $\til{a} = (\gamma_n y, x)$.

According to~\cite[Corollary~4.16]{our_doubly-alternative}, in Cayley--Dickson algebras all zero divisors appear to be two-sided zero divisors, that is, $Z(\A_n) = Z_{LR}(\A_n)$. However, in case of Cayley--Dickson algebras with anisotropic norm a stronger result holds.

\begin{lemma}[{\cite[Corollaries~1.6 and~1.12]{moreno}, \cite[Lemma~5.1]{our_doubly-alternative}}]  \label{lemma:moreno-zero-divisor-conditions} \label{lemma:moreno-doubly-pure} \label{lemma:moreno-tilde}
Let~$\A_n$ be a Cayley--Dickson algebra with anisotropic norm, $a, b \in \A_n$. Then $ab = 0$ if and only if $ba = 0$ if and only if $a\til{b} = 0$.
\end{lemma}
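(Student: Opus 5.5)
The plan is to go in two stages. First show that any nonzero $a,b$ with $ab=0$ both have zero trace. Then the symmetry $ab=0 \Leftrightarrow ba=0$ becomes a formal consequence of the involution. The statement about $\til{b}$ is then handled by comparing components in $\A_{n+1}$. Throughout I use the standard Cayley--Dickson identities, which hold for all $n$: the involution is an anti-automorphism, $\overline{xy}=\bar y\,\bar x$; and the adjoint rule $\langle xy,z\rangle=\langle y,\bar x z\rangle=\langle x, z\bar y\rangle$, together with Proposition~\ref{proposition:lambda-form}.

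\emph{Step 1 (zero divisors are pure).} Write $a=\alpha+a'$ and $b=\beta+b'$ with $\alpha,\beta\in\mathbb{F}$ and $t(a')=t(b')=0$, and assume $ab=0$ with $a,b\neq 0$. By the adjoint rule, $\langle a'b',a'\rangle=\langle b',\bar a' a'\rangle=n(a')\langle b',e_0\rangle=0$, and similarly $\langle a'b',b'\rangle=0$. Also the real part of $a'b'$ equals $-\langle a',b'\rangle$. Expanding $ab=\alpha\beta+\alpha b'+\beta a'+a'b'$, the real part and the pairings of $ab$ with $a'$ and $b'$ give
\begin{equation*}
\alpha\beta=\langle a',b'\rangle,\qquad \beta n(a')+\alpha\langle a',b'\rangle=0,\qquad \alpha n(b')+\beta\langle a',b'\rangle=0 .
\end{equation*}
Substituting the first equation into the second gives $\beta\,(n(a')+\alpha^2)=\beta\, n(a)=0$, so $\beta=0$ by anisotropy. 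Symmetrically, $\alpha=0$.

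\emph{Step 2 ($ab=0\Leftrightarrow ba=0$).} Since $a,b$ are now pure, $\bar a=-a$ and $\bar b=-b$. Hence $0=\overline{ab}=\bar b\,\bar a=ba$. The converse follows by symmetry.

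\emph{Step 3 ($ab=0\Leftrightarrow a\til{b}=0$).} Write $a=(a_1,a_2)$ and $b=(b_1,b_2)$, so that $\til b=(\gamma_n b_2,b_1)$. By Steps 1 and 2, the condition $ab=0$ is equivalent to $ab=ba=0$ with $t(a_1)=t(b_1)=0$. This yields four equations in $\A_n$:
\begin{align*}
a_1b_1+\gamma_n\bar b_2a_2&=0, & b_2a_1+a_2\bar b_1&=0,\\
b_1a_1+\gamma_n\bar a_2b_2&=0, & a_2b_1+b_2\bar a_1&=0 .
\end{align*}
On the other hand,
\begin{equation*}
a\til b=\bigl(\gamma_n(a_1b_2+\bar b_1a_2),\; b_1a_1+\gamma_n a_2\bar b_2\bigr).
\end{equation*}
The plan is to obtain each component of $a\til b$ as a combination of the four equations, their conjugates, and trace-splittings. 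Concretely, write $a_2=\lambda+a_2'$ and $b_2=\mu+b_2'$, and use $\bar x=-x$ for the pure parts. Then differences such as $a_2\bar b_2-\bar a_2 b_2$ collapse to $2(\mu a_2'-\lambda b_2')$ plus commutator terms, which are controlled by the conjugated equations. The converse direction follows because $\til{\til b}=\gamma_n b$.

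The main obstacle is Step 3. Here the scalar parts $\lambda,\mu$ of the second components do not cancel automatically. If the direct bookkeeping fails, I would fall back on the pairing method of Step 1: show $n(a\til b)=0$ by computing $\langle a\til b,a\til b\rangle$ through the adjoint rule in terms of $\langle ab,\cdot\rangle$ and $\langle ba,\cdot\rangle$, and then conclude by anisotropy.
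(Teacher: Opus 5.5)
Your Steps 1 and 2 are correct. The three pairings of $ab$ with $e_0$, $a'$, $b'$ do force $\beta\,n(a)=\alpha\,n(b)=0$, and conjugation then gives $ba=0$. As a by-product you also get $\langle a,b\rangle=0$.

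The gap is in Step 3, which carries the real content of the lemma (the paper's label \emph{doubly pure} points to it). You need the \emph{second} components of zero divisors to have zero trace as well, i.e.\ $\lambda=\mu=0$. Your proposal leaves exactly this open. Direct bookkeeping cannot close it. If you carry it out, no commutator terms remain. The first component of $ba=0$ turns the second component of $a\til{b}$ into $\gamma_n(a_2\bar b_2-\bar a_2 b_2)=2\gamma_n(\mu a_2'-\lambda b_2')$. The conjugate of the second component of $ab=0$, namely $a_1\bar b_2=b_1\bar a_2$, turns the first component into $\gamma_n(a_1b_2-b_1a_2)=2\gamma_n(\mu a_1-\lambda b_1)$. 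Altogether
\[
a\til{b}=2\gamma_n(\mu a-\lambda b).
\]
So the component equations only reduce the claim to $\lambda=\mu=0$; a further use of anisotropy is required. Your fallback does not supply it. The expression $\langle a\til{b},a\til{b}\rangle=\langle \til{b},\bar a(a\til{b})\rangle$ cannot be simplified without alternativity. Moreover $n(a\til{b})=4\gamma_n^2\bigl(\mu^2 n(a)+\lambda^2 n(b)\bigr)$, so proving it vanishes is the same problem.

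One way to finish is to pair the identity above with $a$ rather than with $a\til{b}$. By the adjoint rule,
\[
\langle a\til{b},a\rangle=\langle \til{b},\bar a a\rangle=n(a)\langle \til{b},e_0\rangle=\gamma_n\mu\,n(a).
\]
The right-hand side gives $2\gamma_n\mu\,n(a)$, because $\langle a,b\rangle=0$. Hence $\gamma_n\mu\,n(a)=0$, and so $\mu=0$. The same argument applied to $ba=0$, which gives $b\til{a}=2\gamma_n(\lambda b-\mu a)$ paired with $b$, yields $\lambda=0$. Therefore $a\til{b}=0$. Your converse via $\til{\til{b}}=\gamma_n b$ is then fine.
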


\begin{lemma}[{\cite[pp.~25--27]{moreno}, \cite[Lemma~5.6]{our_doubly-alternative}}] \label{lemma:A_n-alternative-properties} \label{lemma:alternative-orthogonal-system}
Let $\A_{n+1}$ be a Cayley--Dickson algebra with anisotropic norm, the elements $c,d \in \A_n$ alternate with $a,b \in \A_n$, $(a,b),(c,d) \in Z(\A_{n+1})$, $(a,b)(c,d) = 0$. Then
\begin{enumerate}[{\rm (1)}]
    \item $t(a) = t(b) = t(c) = t(d) = 0$;
    \item $n(a) = -\gamma_n n(b)$ and $n(c) = -\gamma_n n(d)$;
    \item $[c,a,d] = 2n(c)b$, $[c,b,d] = -2n(d)a$;
    \item $\{ c,a,d \} = \{ c,b,d \} = 0$;
    \item $a \perp b$;
    \item $a, b \in \spn(e_0,c,d,cd)^{\perp}$;
    \item $(c,d)(ac,ad) = 0$.
\end{enumerate}
\end{lemma}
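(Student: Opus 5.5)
The plan is to write out the condition $(a,b)(c,d)=0$ componentwise and extract all items from the two resulting equations in $\A_n$ by multiplying them by conjugates of $c$ and $d$. By Definition~\ref{definition:cayley-dickson-algebras},
$$
ac = -\gamma_n \bar d b, \qquad da = -b\bar c .
$$
By Lemma~\ref{lemma:moreno-zero-divisor-conditions} we also have $(c,d)(a,b)=0$ and $(a,b)\til{(c,d)}=0$. This gives a second and third pair of equations, namely
$$
ca = -\gamma_n \bar b d, \qquad bc = -d\bar a,
$$
together with the analogous ones for $\til{(c,d)}=(\gamma_n d, c)$. Anisotropy implies that all four components are nonzero. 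For example, if $b=0$, then $ac=0=da$, so $a=0$ or $c=d=0$, which is impossible for nonzero zero divisors. The same argument applies to each of the other components.

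The basic tool is the alternation hypothesis combined with flexibility. Since $[c,c,a]=0$ and $[x,y,z]=-[z,y,x]$, we also get $[a,c,c]=0$, and the same holds for $\bar c=t(c)-c$. Hence $(ac)\bar c=n(c)a$ and $\bar c(ca)=n(c)a$, and similarly for $d$ and for $b$. Right-multiplying $ac=-\gamma_n\bar d b$ by $\bar c$ gives $n(c)a=-\gamma_n(\bar d b)\bar c$. Right-multiplying $da=-b\bar c$ by $\bar a$ is not available, since $a$ is not assumed alternative; instead I left-multiply by $\bar d$ to get $n(d)a=-\bar d(b\bar c)$. Subtracting these two expressions for $a$ produces an associator $[\bar d,b,\bar c]$. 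Doing the same with the equations coming from $(c,d)(a,b)=0$ produces the anti-associator. Expanding $\bar c=t(c)-c$ and $\bar d=t(d)-d$, I will compare the resulting identities and pair them with $a$ and $b$ via $\langle\cdot,\cdot\rangle$ (Proposition~\ref{proposition:lambda-form}). Since associators have zero trace and $\langle x,[x,y,z]\rangle$-type terms vanish in flexible algebras, the trace parts should separate. This should give $t(c)a$ and $t(d)b$ as multiples of each other, and together with anisotropy this forces $t(c)=t(d)=0$. The roles of $(a,b)$ and $(c,d)$ are not symmetric in the hypothesis, so for $t(a)=t(b)=0$ I will take traces of $ac=-\gamma_n\bar d b$: we have $t(ac)=2\langle a,\bar c\rangle$, and $\bar c=-c$ once $t(c)=0$. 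Comparing this with $\langle \bar d, \bar b\rangle$-type expressions obtained from the other equations should give (1) and (5).

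For (2), I will take norms: $n(c)n(a)=\langle (ac)\bar c\cdot\ldots\rangle$. More concretely, $n(c)\,n(a)=\langle ac,ac\rangle$ holds because $c$ alternates with $a$, as the subalgebra argument above shows: $\langle ac,ac\rangle=t((ac)(\bar c\bar a))/2$. We also have $n(\bar d b)=n(d)n(b)$. So $n(a)n(c)=\gamma_n^2 n(b)n(d)$ and, symmetrically, $n(a)n(d)=n(b)n(c)$. Dividing, which is allowed by anisotropy, gives $n(c)^2=\gamma_n^2 n(d)^2$, and the sign is fixed by the remaining relation. Items (3) and (4) are then exactly the sum and the difference of the two expressions for $n(c)a$ (resp.\ $n(d)b$) computed above, once traces are zero. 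Item (6) follows from (3)--(4) and (1) by pairing $[c,a,d]$ with $e_0,c,d,cd$. Finally, (7) is a direct check of the product formula, using $\{c,a,d\}=0$ and alternation to move $a$ through.

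I expect the main obstacle to be (1) together with the bookkeeping leading to (3)--(4). Norm multiplicativity and the cancellations are only justified for products involving the alternating elements $c,d$, and $a,b$ are not assumed alternative. I will therefore arrange every manipulation so that $c$ or $d$ is the element appearing twice in each associator, and fall back on Lemma~\ref{lemma:A_n-anticomm} to convert trace-zero and orthogonality statements into anticommutation when needed.
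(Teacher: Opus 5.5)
\textbf{There is a genuine gap: item (1) is never proved, and (5) is in the same state.} Items (4), (6) and (7) all depend on zero traces, so the gap propagates to them.

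The mechanism you sketch for (1) does not close. Tracing $ac=-\gamma_n\bar d b$ gives only
\[
\tfrac12 t(a)t(c)-\langle a,c\rangle=-\gamma_n\langle b,d\rangle ,
\]
and the other component equations give similar relations. These mix traces with inner products you do not control and never isolate $t(a)$. Since $a$ and $b$ are never multiplied together in any component equation, no such trace can produce $\langle a,b\rangle$, so your stated route to (5) cannot work. Obtaining ``$t(c)a$ and $t(d)b$ as multiples of each other'' would force $t(c)=t(d)=0$ only if you already knew $a,b$ were linearly independent, which is essentially (5). Finally, $\langle x,[x,y,z]\rangle$ does not vanish in flexible algebras in general: in $\mathbb{S}$, if $xy=0$ with $x,y\neq 0$, then $\langle x,[x,y,\bar y]\rangle=-n(x)n(y)\neq 0$. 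In your setting such terms vanish only through the alternation hypotheses, case by case.

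\textbf{The missing idea for (1).} By Lemma~\ref{lemma:inner-product-movement}, in any Cayley--Dickson algebra
\[
\langle xy,y\rangle=n(y)\langle x,e_0\rangle \quad\text{and}\quad \langle xy,x\rangle=n(x)\langle y,e_0\rangle .
\]
So under an anisotropic norm both nonzero factors of a zero product have zero trace.
\begin{itemize}
\item Applied to $(a,b)(c,d)=0$, this gives $t(a)=t(c)=0$.
\item Applied to $(a,b)\til{(c,d)}=0$ and $(c,d)\til{(a,b)}=0$, which hold by Lemma~\ref{lemma:moreno-tilde} with $\til{(c,d)}=(\gamma_n d,c)$ and $\til{(a,b)}=(\gamma_n b,a)$, it gives $t(d)=t(b)=0$.
\end{itemize}

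\textbf{A route to (5)} once (1)--(4) are known. From $ac=\gamma_n db$ you get $d(ac)=-\gamma_n n(d)b=n(c)b$, and from $da=-b\bar c$ you get $da=bc$. Then Corollary~\ref{corollary:pull-out-norm} gives
\[
n(c)\langle a,b\rangle=\langle a,d(ac)\rangle=\langle \bar d a,ac\rangle=-\langle bc,ac\rangle=-n(c)\langle b,a\rangle ,
\]
hence $a\perp b$.

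\textbf{Two smaller points.}
\begin{itemize}
\item In (2), the sign is not fixed by some ``remaining relation''. It comes from anisotropy on $\A_{n+1}$: $n(c)-\gamma_n n(d)=n((c,d))\neq 0$ rules out $n(c)=\gamma_n n(d)$.
\item In (6), pairing $[c,a,d]$ directly with $cd$ would need $c$ to alternate with $d$ or with $ad$, which is not assumed. Use instead $\langle b,cd\rangle=-\langle cb,d\rangle=\tfrac12 t((cb)d)=-\tfrac12 n(d)t(a)=0$, which relies on the $b$-version $(cb)d=-n(d)a$ of (3)--(4).
\end{itemize}
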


\begin{corollary}[{\cite[Proposition~11.1]{biss}, \cite[Lemma~4.19]{our_doubly-alternative}}] \label{lemma:double-alternative-annihilators}
Let $\A_{n+1}$ be a Cayley--Dickson algebra with anisotropic norm, $(c,d) \in Z(\A_{n+1})$, and the elements $c, d$ are alternative in~$\A_n$. Then
$$
O_{\A_{n+1}}((c,d)) = \left\{ \left(a, \dfrac{(ca)d}{n(c)} \right) \; \bigg| \; t(a) = 0, \: \{ c,a,d \} = 0 \right\}.
$$
\end{corollary}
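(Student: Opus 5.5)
The plan is to prove the two inclusions separately. The inclusion $\subseteq$ should follow directly from Lemma~\ref{lemma:alternative-orthogonal-system}. The inclusion $\supseteq$ will need a direct computation with the Cayley--Dickson multiplication formula.

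\emph{Preliminaries.} First I record that $c$ and $d$ have good properties. Since $(c,d)\in Z(\A_{n+1})$, there is a nonzero $(x,y)$ with $(x,y)(c,d)=0$. Because $c,d$ are alternative in~$\A_n$, they alternate with $x,y$. Lemma~\ref{lemma:alternative-orthogonal-system} then gives $t(c)=t(d)=0$ and $n(c)=-\gamma_n n(d)$. Since $(c,d)\neq 0$ and the norm is anisotropic, this forces $c\neq 0$ and $d \neq 0$, so $n(c)\neq 0$. Hence the formula in the statement makes sense.

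\emph{Inclusion $\subseteq$.} Let $(a,b)\in O_{\A_{n+1}}((c,d))$ be nonzero; the zero element clearly lies in the right-hand side, with $a=0$. Then $(a,b)(c,d)=0$, so $(a,b)$ is a zero divisor, and again $c,d$ alternate with $a,b$. Applying Lemma~\ref{lemma:alternative-orthogonal-system}, item~(1) gives $t(a)=0$ and item~(4) gives $\{c,a,d\}=0$. Item~(3) gives $[c,a,d]=2n(c)b$. Adding the relations $(ca)d - c(ad) = 2n(c)b$ and $(ca)d + c(ad) = 0$ yields $(ca)d = n(c)b$, that is, $b = (ca)d/n(c)$. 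This proves $\subseteq$.

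\emph{Inclusion $\supseteq$.} Take $a$ with $t(a)=0$ and $\{c,a,d\}=0$, and put $b=(ca)d/n(c) = -c(ad)/n(c)$. By Lemma~\ref{lemma:moreno-zero-divisor-conditions}, it suffices to check the single product
$$
(a,b)(c,d) = (ac+\gamma_n\bar d b,\; da+b\bar c) = (ac-\gamma_n d b,\; da - bc) = 0.
$$
For the second component I would write $bc = \frac{1}{n(c)}((ca)d)c$. I would then use the alternativity of $c$ together with $t(c)=0$, so that $c(cx)=(xc)c=-n(c)x$ for all $x$, and the flexible/Moufang-type identities available for alternative elements. Using $\{c,a,d\}=0$ to swap between $(ca)d$ and $-c(ad)$, the aim is to reduce $((ca)d)c$ to $n(c)\,da$. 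For the first component, $\gamma_n d b = -\frac{\gamma_n}{n(c)} d((ca)d)$. Here I would use $d(xd)=(dx)d$ (flexibility) and $d(dy)=-n(d)y$. Combined with $-\gamma_n n(d) = n(c)$, this should reduce $\gamma_n db$ to $ac$, provided I can move $d$ past $ca$.

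\emph{Main obstacle.} The hardest step is exactly these associator manipulations in the non-alternative algebra~$\A_n$, since only $c$ and $d$ are alternative, not $a$. If a direct reduction stalls, my fallback is a dimension argument. The map $a\mapsto (a,(ca)d/n(c))$ is linear and injective on $V=\{a \mid t(a)=0,\ \{c,a,d\}=0\}$. The first inclusion shows that $O_{\A_{n+1}}((c,d))$ sits inside its image. So it would suffice to show $\dim O_{\A_{n+1}}((c,d)) \geq \dim V$, for instance by producing enough annihilating elements via item~(7) of Lemma~\ref{lemma:alternative-orthogonal-system}, namely $(c,d)(ac,ad)=0$.
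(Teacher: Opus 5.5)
Your inclusion $\subseteq$ is correct. Combining items (3) and (4) of Lemma~\ref{lemma:alternative-orthogonal-system} gives $(ca)d=n(c)b$, and your check that $n(c)\neq 0$ is valid. This is more self-contained than the paper, which imports \cite[Lemma~4.19]{our_doubly-alternative}. That lemma already proves both inclusions in the form $b=-\frac{(da)c}{n(c)}$, $d(ac)=\chi(da)c$, and the paper only rewrites it via $\chi=-1$ and conjugation.

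The genuine gap is the inclusion $\supseteq$, which you do not establish. Your direct computation stops exactly at ``provided I can move $d$ past $ca$''. The needed relation $(ca)d=d(ac)$ is not an identity: in $\mathbb{O}$ with $c=e_1$, $d=e_2$, $a=e_3$ one has $(ca)d=1$ but $d(ac)=-1$. So it must be extracted from the hypothesis $\{c,a,d\}=0$, and your sketch never says how. Flexibility and $d(dy)=-n(d)y$ cannot do this on their own. Moufang-type identities are not at your disposal in a non-alternative $\A_n$ where only $c,d$ are alternative. (Also, $\gamma_n db=+\frac{\gamma_n}{n(c)}\,d((ca)d)$, without the minus sign.) The dimension fallback is circular. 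Item~(7) only turns an annihilator $(a,b)$ you already have into another one, $(ac,ad)$. It therefore cannot produce $\dim V$ independent elements of $O_{\A_{n+1}}((c,d))$, and you have no independent computation of $\dim V$ or $\dim O_{\A_{n+1}}((c,d))$ for general $n$.

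The missing step is a trace-and-conjugation argument.
\begin{enumerate}[(i)]
\item By Proposition~\ref{proposition:lambda-form} and Lemma~\ref{lemma:inner-product-movement}, $t((ca)d)=-2\langle ca,d\rangle=2\langle cd,a\rangle$ and $t(c(ad))=2\langle c,da\rangle=2\langle cd,a\rangle$. So $\{c,a,d\}=0$ forces $\langle a,cd\rangle=0$, hence $t((ca)d)=t(c(ad))=0$.
\item Since $t(a)=t(c)=t(d)=0$ and the involution reverses products, $(ca)d=-\overline{(ca)d}=-\bar d(\bar a\bar c)=d(ac)$. Likewise $c(ad)=-(\bar d\bar a)\bar c=(da)c$. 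Thus $b=\frac{d(ac)}{n(c)}=-\frac{(da)c}{n(c)}$, exactly the form in the imported lemma.
\item Now only the alternativity of $c,d$ is needed; for right multiplication use $[y,c,c]=-[c,c,y]=0$ by flexibility. This gives $\gamma_n db=\frac{\gamma_n}{n(c)}\,d(d(ac))=-\frac{\gamma_n n(d)}{n(c)}\,ac=ac$ and $bc=-\frac{((da)c)c}{n(c)}=da$.
\end{enumerate}
Hence both components $ac-\gamma_n db$ and $da-bc$ of $(a,b)(c,d)$ vanish, which proves $\supseteq$.
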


\begin{proof}
According to~\cite[Lemma~4.19]{our_doubly-alternative},
\begin{equation} \label{equation:orthogonalizer}
O_{\A_{n+1}}((c,d)) = \left\{ \left(a, -\dfrac{(da)c}{n(c)} \right) \; \bigg| \; t(a) = 0, \: d(ac) = \chi (da)c \right\},
\end{equation}
where $\chi = \frac{\gamma_n n(d)}{n(c)}$. Denote $b = -\frac{(da)c}{n(c)} = -\chi \frac{d(ac)}{n(c)}$. Since $c, d$ are alternative in~$\A_n$, they, in particular, alternate with $a, b \in \A_n$. By Lemma~\ref{lemma:A_n-alternative-properties}(2), $\chi = -1$, so the last condition on the right-hand side of Eq.~\eqref{equation:orthogonalizer} takes on the form $\{ d,a,c \} = (da)c + d(ac) = 0$. It follows from Lemma~\ref{lemma:A_n-alternative-properties}(1) that $t(a) = t(b) = t(c) = t(d) = 0$. Then $\{ c,a,d \} = -\overline{\{ d,a,c \}} = 0$ and $b = -\bar{b} = -\overline{\frac{d(ac)}{n(c)}} = \frac{(ca)d}{n(c)}$.
\end{proof}

\begin{theorem}[{\cite[Theorem~5.11]{our_doubly-alternative}}]
\label{theorem:double-hexagon}
Let $\A_{n+1}$ be a Cayley--Dickson algebra with anisotropic norm, $(a,b),(c,d) \in Z(\A_{n+1})$, $(a,b)(c,d) = 0$, and the elements $a,b \in \A_n$ alternate strongly with $c,d \in \A_n$, i.e.,  $[x,x,y] = [y,y,x] = 0$ for $x \in \{ a, b \}$ and $y \in \{ c, d\}$. Then
\begin{enumerate}[{\rm (1)}]
    \item The elements $ac,ad$ alternate strongly with $a,b,c,d$. \label{item:strong-alternativity}
    \item The elements $e_0,a,b,c,d,ac,ad$ are orthogonal with respect to $\langle \cdot, \cdot \rangle$. \label{item:orthonormal-system}
    \item There exists a subgraph of $\Gamma_O(\A_{n+1})$ which is depicted in Figure~\ref{figure:double-hexagon}(a) and called a double hexagon. \label{item:double-hexagon}
    \item All elements in the vertices of the double hexagon are linearly independent. \label{item:linear-independence}
\end{enumerate}
\end{theorem}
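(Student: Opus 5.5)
This is a cited result (Theorem~5.11 of~\cite{our_doubly-alternative}); I would reprove it by building the double hexagon explicitly from Lemma~\ref{lemma:A_n-alternative-properties} and Corollary~\ref{lemma:double-alternative-annihilators}.

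The first step is to collect the metric data. Strong alternation gives that $c,d$ alternate with $a,b$, so Lemma~\ref{lemma:A_n-alternative-properties} applies to the product $(a,b)(c,d)=0$. Applying the same lemma to the reversed product $(c,d)(a,b)=0$, which vanishes by Lemma~\ref{lemma:moreno-zero-divisor-conditions}, is legitimate because $a,b$ alternate with $c,d$. Together these give:
\begin{itemize}
\item all of $a,b,c,d$ have zero trace;
\item $n(a)=-\gamma_n n(b)$ and $n(c)=-\gamma_n n(d)$;
\item $a\perp b$ and $c\perp d$;
\item $a,b\perp e_0,c,d,cd$, and symmetrically $c,d\perp e_0,a,b,ab$.
\end{itemize}

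The second step handles the new elements $ac$ and $ad$. Since $a\perp c$ and both are imaginary, $ac=-ca$ by Lemma~\ref{lemma:A_n-anticomm}, and $t(ac)=-2\langle a,c\rangle=0$. The remaining orthogonalities, e.g. $\langle ac,b\rangle$, $\langle ac,d\rangle$, $\langle ac,ad\rangle$, I would reduce to traces of triple products such as $t((ac)\bar b)$. These can be rewritten using flexibility, the Moufang-type identities available for alternating pairs, and the anti-associator relations $\{c,a,d\}=\{c,b,d\}=0$ from item~(4). For instance, $\langle ac,ad\rangle$ becomes a multiple of $n(a)\langle c,d\rangle=0$ once $a$ alternates with $c$ and $d$. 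This yields item~(\ref{item:orthonormal-system}).

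Item~(\ref{item:strong-alternativity}) is the step I expect to be the main obstacle. One needs $[ac,ac,x]=0$, i.e. $(ac)^2x=(ac)((ac)x)$, and also $[x,x,ac]=0$. Since $ac$ has zero trace, $(ac)^2=-n(ac)=-n(a)n(c)$; the norm is multiplicative on these pairs because $a$ alternates with $c$. So the claim reduces to $L_{ac}^2=-n(a)n(c)\,\mathrm{id}$ on the relevant vectors. I would first try to write $L_{ac}=-L_aL_c$ on $\spn(a,b,c,d)$ and then expand using $L_a^2=-n(a)$, $L_c^2=-n(c)$ and the anticommutation $L_aL_c=-L_cL_a$ on these vectors, itself derived from items~(3) and~(4). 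The case $x=ad$ would be handled the same way. If this direct approach stalls, the fallback is to work inside the subalgebra generated by $a$ and $c$, which is associative, as in Moreno's argument.

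Finally, item~(\ref{item:double-hexagon}) amounts to verifying that each listed pair of vertices is orthogonal. Lemma~\ref{lemma:A_n-alternative-properties}(7) gives the key product $(c,d)(ac,ad)=0$. Iterating it with the roles of the pairs permuted, which is legitimate by item~(\ref{item:strong-alternativity}), produces the remaining edges such as $(a,b)(ac,\pm ad)$-type products. Each membership can be checked against the description of $O_{\A_{n+1}}$ in Corollary~\ref{lemma:double-alternative-annihilators} via the conditions $t=0$ and $\{\cdot,\cdot,\cdot\}=0$. For item~(\ref{item:linear-independence}), the vertices are pairs whose first components come from the orthogonal system $a,c,ac$ and second components from $b,d,ad$. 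Since the norm is anisotropic, the vectors of item~(\ref{item:orthonormal-system}) are nonzero and mutually orthogonal, hence independent, so a Gram-matrix argument on the first and second coordinates gives linear independence.
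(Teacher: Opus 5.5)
The paper gives no proof of this statement; it is quoted from \cite[Theorem~5.11]{our_doubly-alternative}, so your argument has to stand on its own. Item~(2) is essentially fine, although no Moufang-type identities are needed (and they are not available in a general $\A_n$). Lemma~\ref{lemma:A_n-alternative-properties}(5),(6), applied to both orders of the product, together with Lemma~\ref{lemma:inner-product-movement}, suffice: e.g.\ $\langle ac,b\rangle=-\langle c,ab\rangle=0$ and $\langle ac,d\rangle=\langle a,cd\rangle=0$.

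The genuine gap is item~(1). Your identity $L_{ac}=-L_aL_c$ on $\spn(a,b,c,d)$ is false. On $a$ and $c$ you are inside the quaternion subalgebra $\spn(1,a,c,ac)$, where $(ac)a=a(ca)=n(a)c$ and $(ac)c=a(cc)=-n(c)a$. So $L_{ac}=+L_aL_c$ there, with nonzero values. Expanding $L_{ac}^2b$ your way would also need $[a,a,b]=0$, which is not a hypothesis, and would need identities on vectors such as $cb$ that lie outside $\spn(a,b,c,d)$. The fallback to the subalgebra generated by $a,c$ never reaches $b$ and $d$. You also give no argument at all for the other half of strong alternation, $[x,x,ac]=[x,x,ad]=0$.

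What is missing is to use Lemma~\ref{lemma:A_n-alternative-properties}(3) and~(4) together to solve for the products themselves.
\begin{itemize}
\item For $(a,b)(c,d)=0$ they give $(ca)d=n(c)b$ and $c(ad)=-n(c)b$.
\item For $(c,d)(a,b)=0$ they give $(ac)b=n(a)d$ and $(ad)b=-n(b)c$.
\item Conjugating yields $b(ac)=-n(a)d$, $d(ac)=n(c)b$, $(ad)c=n(c)b$ and $b(ad)=n(b)c$.
\end{itemize}
With $n(ac)=n(a)n(c)$ and $n(a)n(d)=n(b)n(c)$, the identities $[ac,ac,x]=[ad,ad,x]=0$ become one-line substitutions, e.g.\ $(ac)((ac)b)=n(a)(ac)d=-n(a)n(c)b$. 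For $[x,x,ac]=[x,x,ad]=0$ you additionally need the components of $(a,b)(c,d)=0$ itself, namely $ac=\gamma_n db$ and $da=bc$. For instance, $b(b(ac))=-n(a)\,bd=\gamma_n^{-1}n(a)\,ac=-n(b)\,ac$.

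The same formulas drive item~(3). Iterating Lemma~\ref{lemma:A_n-alternative-properties}(7) gives $(ac,ad)\bigl(c(ac),c(ad)\bigr)=n(c)\,(ac,ad)(a,-b)=0$, and so on around the $6$-cycle $(a,b),(c,d),(ac,ad),(a,-b),(c,-d),(ac,-ad)$. In particular $(ac,ad)$ is not adjacent to $(a,b)$, so your ``$\pm$'' is wrong for one sign. The cycle is then doubled by the twins $\til{x}$, since $xy=0$ if and only if $x\til{y}=0$ by Lemma~\ref{lemma:moreno-zero-divisor-conditions}. You cannot use Corollary~\ref{lemma:double-alternative-annihilators} to check memberships, because it assumes $c,d$ are alternative in $\A_n$, which is not a hypothesis here.

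Finally, in item~(4) your description of the coordinates is off. The twins, e.g.\ $(\gamma_n d,c)$ (cf.\ $(d,-c)$ and $(ad,ac)$ in the proof of Proposition~\ref{proposition:lower-bound}), have first components in $\{b,d,ad\}$. The correct argument is that the twelve vertices span $\spn\{(x,0),(0,x)\mid x\in\{a,b,c,d,ac,ad\}\}$, which is $12$-dimensional by item~(2).
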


\begin{figure}[ht]
\centering
{\includegraphics[width=0.74\linewidth]{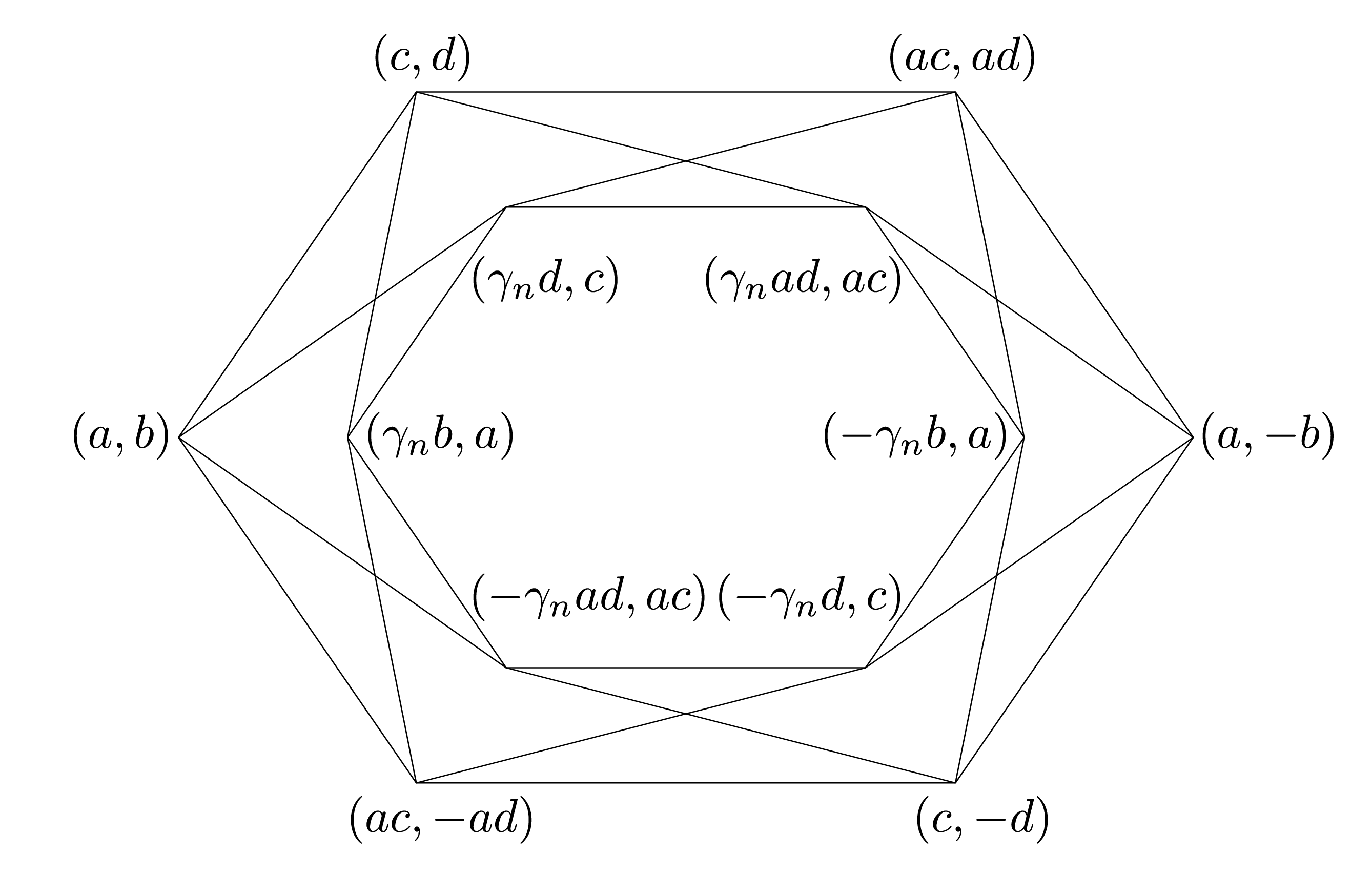}
\subcaption{Algebra with anisotropic norm.}}
{\includegraphics[width=0.55\linewidth]{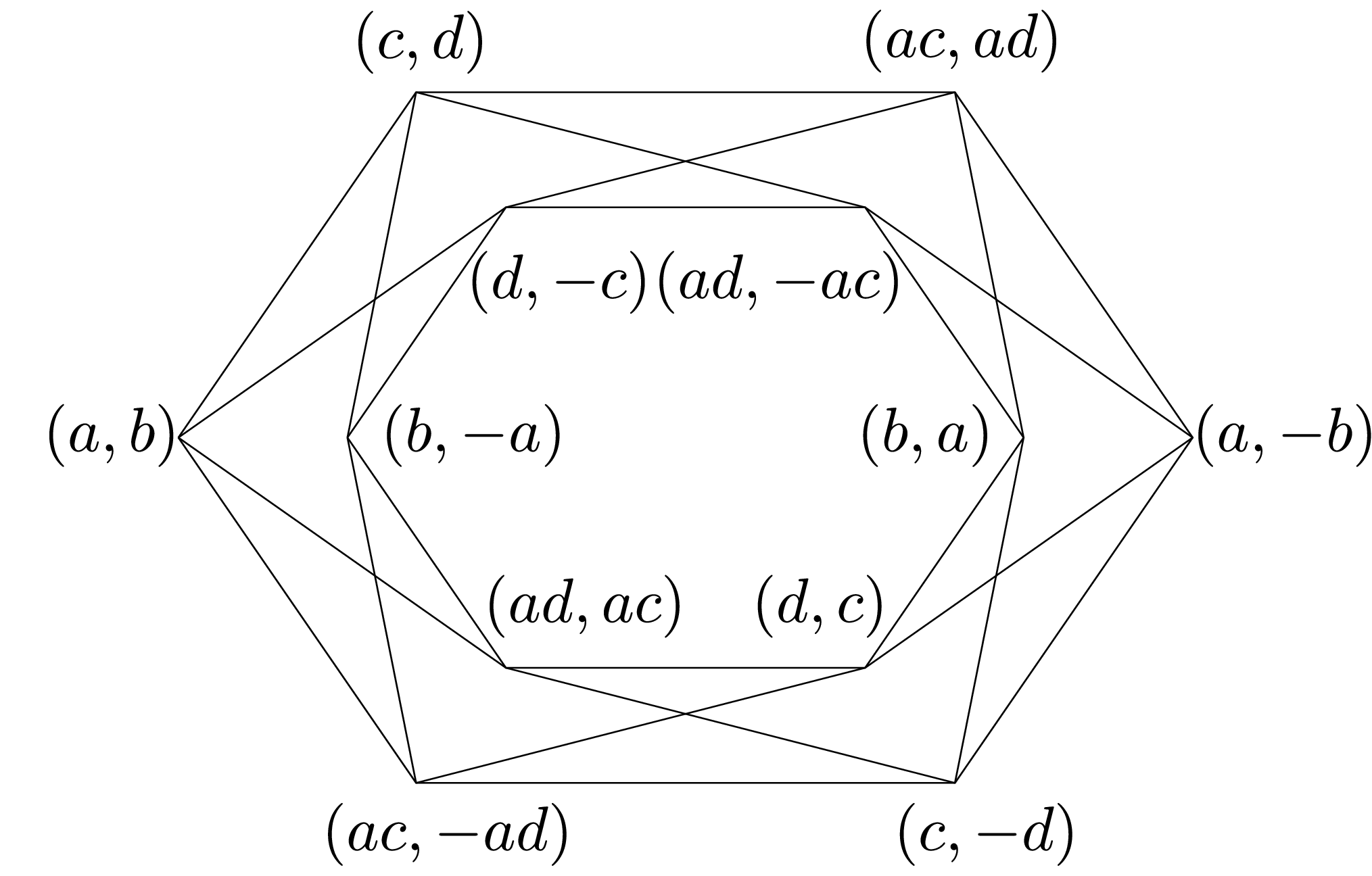}
\subcaption{Algebra of the main sequence.}}
\caption{\label{figure:double-hexagon}Double hexagons.}
\end{figure}

An important special case of Cayley--Dickson algebras with anisotropic norm are the so called real Cayley--Dickson algebras of the main sequence.

\begin{definition} \label{definition:A_n-examples}
Let $\mathbb{F} = \mathbb{R}$. The algebra $\A_n \{ \gamma_0, \dots, \gamma_{n-1} \}$ is called {\em a real Cayley--Dickson algebra of the main sequence} if $\gamma_k = -1$ for each $k = 0, \dots, n-1$. We denote this algebra by~$\Main_n$.
\end{definition}

\begin{proposition}[{\cite[Proposition~3.2]{biss}}] \label{proposition:A_n-euclidean-product}
If $\A_n = \Main_n$, then the bilinear form $\langle a, b \rangle$ is a Euclidean inner product. In particular, the norm $n(a)$ is anisotropic.
\end{proposition}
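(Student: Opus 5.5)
The statement to prove is Proposition~\ref{proposition:A_n-euclidean-product}: in~$\Main_n$ the form $\langle \cdot, \cdot \rangle$ is positive definite, and hence the norm is anisotropic. I would argue by induction on $n$, using the recursive formula for the norm from Proposition~\ref{proposition:real-cayley-dickson-properties}.

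For the base case $n = 0$ we have $\Main_0 = \mathbb{R}$ with the identity involution. Then $n(a) = a\bar a = a^2$, and $\langle a, b \rangle = ab$ is the standard Euclidean inner product on~$\mathbb{R}$.

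For the inductive step, assume that $n(\cdot)$ is positive definite on $\Main_n$. Since $\gamma_n = -1$, Proposition~\ref{proposition:real-cayley-dickson-properties} gives $n((a,b)) = n(a) + n(b)$ for $(a,b) \in \Main_{n+1}$. By the inductive hypothesis both summands are nonnegative, and their sum vanishes only when $a = b = 0$. So $n$ is a positive definite quadratic form on $\Main_{n+1}$.

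The form $\langle \cdot, \cdot \rangle$ is by Proposition~\ref{proposition:lambda-form} the symmetric bilinear form associated with $n$, with $\langle a, a \rangle = n(a)$. A symmetric bilinear form whose quadratic form is positive definite is a Euclidean inner product. Anisotropy is then immediate, since $n(a) = \langle a, a \rangle > 0$ for every $a \neq 0$.

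Polarizing the identity $n((a,b)) = n(a) + n(b)$ also shows that $\Main_{n+1} = \Main_n \oplus \Main_n$ is an orthogonal direct sum. This fact is not needed for the proof. No step presents a real obstacle. The only point to check is the sign convention $n((a,b)) = n(a) - \gamma_n n(b)$, which with $\gamma_n = -1$ turns into the sum $n(a) + n(b)$.
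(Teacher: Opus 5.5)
Your proof is correct and complete. The base case $\Main_0=\mathbb{R}$ with the identity involution, the recursion $n((a,b))=n(a)+n(b)$ obtained from $\gamma_n=-1$, and the fact that a symmetric bilinear form whose quadratic form is positive definite is an inner product together give positive definiteness and hence anisotropy. The paper gives no proof of its own and only cites Biss--Dugger--Isaksen; your induction along the Cayley--Dickson doubling is the standard argument for this fact.
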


\begin{corollary}
The statements~\ref{lemma:moreno-tilde}--\ref{theorem:double-hexagon} are valid for all real Cayley--Dickson algebras of the main sequence. In this case the double hexagon can be simplified, and its new form is depicted in Figure~\ref{figure:double-hexagon}(b). In~\cite[Theorem~4.11]{our_orthographs1} the multiplication table of the vertices of this double hexagon has also been obtained, and it has a convenient block structure.
\end{corollary}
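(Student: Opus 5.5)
The first assertion of the corollary requires no new argument. Every statement from Lemma~\ref{lemma:moreno-tilde} through Theorem~\ref{theorem:double-hexagon} has only one structural hypothesis, namely that the Cayley--Dickson algebra $\A_n$ (respectively $\A_{n+1}$) has anisotropic norm, together with $\chrs \mathbb{F} \neq 2$. For $\Main_n$ we have $\mathbb{F} = \mathbb{R}$, so $\chrs \mathbb{F} = 0$. By Proposition~\ref{proposition:A_n-euclidean-product}, $\langle \cdot,\cdot \rangle$ is a Euclidean inner product, hence $n(a) = \langle a,a\rangle > 0$ for $a \neq 0$ and the norm is anisotropic. Since $\Main_{n+1} = \Main_n\{-1\}$ is again of the main sequence, the same holds at level $n+1$. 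I will therefore verify the hypotheses once and then quote Lemmas~\ref{lemma:moreno-tilde} and~\ref{lemma:A_n-alternative-properties}, Corollary~\ref{lemma:double-alternative-annihilators} and Theorem~\ref{theorem:double-hexagon} verbatim.

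For the simplification of the double hexagon, I will substitute $\gamma_n = -1$ into every formula that enters the vertices of Figure~\ref{figure:double-hexagon}(a), with the following effects.
\begin{itemize}
\item The map $a \mapsto \til{a}$ becomes $(x,y) \mapsto (-y,x)$.
\item Lemma~\ref{lemma:A_n-alternative-properties}(2) gives $n(a) = n(b)$ and $n(c) = n(d)$.
\item The coefficient $\chi = \gamma_n n(d)/n(c)$ from the proof of Corollary~\ref{lemma:double-alternative-annihilators} equals $-1$.
\end{itemize}
Consequently every scalar of the form $\gamma_n n(\cdot)/n(\cdot)$ appearing in the labels of the vertices reduces to $\pm 1$. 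Since vertices of $\Gamma_O$ are lines $[x]$, I may additionally rescale each vertex by a positive real number. With $a,b,c,d$ pairwise orthogonal and mutually normalized, using Theorem~\ref{theorem:double-hexagon}(\ref{item:orthonormal-system}), the labels become the symmetric expressions of Figure~\ref{figure:double-hexagon}(b). Adjacency is preserved because rescaling a vertex does not change the orthogonality relation, and the linear independence in Theorem~\ref{theorem:double-hexagon}(\ref{item:linear-independence}) is unaffected by nonzero scalings.

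The only step requiring care is matching each vertex of Figure~\ref{figure:double-hexagon}(a) with its simplified counterpart in Figure~\ref{figure:double-hexagon}(b). Concretely, this means checking that after the substitutions above, each label coincides with the label in (b) up to a nonzero real factor. I expect this to be a direct bookkeeping check using Lemma~\ref{lemma:A_n-alternative-properties}(2),(3),(7) and the formula for $\til{\cdot}$.

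The final remark about the block-structured multiplication table is a citation of~\cite[Theorem~4.11]{our_orthographs1}, which is proved there for the main sequence itself, so nothing further needs to be proved.
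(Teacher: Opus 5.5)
Your proposal is correct and follows the same route the paper takes; the paper states this corollary without a separate proof. The norm on $\Main_n$ is anisotropic by Proposition~\ref{proposition:A_n-euclidean-product}, so the cited results apply verbatim, Figure~\ref{figure:double-hexagon}(b) is the specialization $\gamma_n=-1$ of Figure~\ref{figure:double-hexagon}(a) (worked out in the author's earlier paper on orthogonality graphs), and the block-structured multiplication table is only quoted.

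One small correction to your list of effects: $\chi=-1$ and $n(a)=-\gamma_n n(b)$ already hold in every algebra with anisotropic norm, by Lemma~\ref{lemma:A_n-alternative-properties}(2). Only $n(a)=n(b)$, $n(c)=n(d)$ and $\til{(x,y)}=(-y,x)$ are genuinely consequences of setting $\gamma_n=-1$.
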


\begin{example}
The complex numbers ($\mathbb{C}$), the quaternions ($\mathbb{H}$), the octonions ($\mathbb{O}$), and the sedenions ($\mathbb{S}$) are the real algebras of the main sequence for $n = 1$, $2$, $3$, and $4$, correspondingly, cf.~\cite{baez}.
\end{example}

\subsection{Octonion and sedenion algebras} \label{subsection:octonions-sedenions}

Exact definitions and some basic properties of~$\mathbb{O}$ and~$\mathbb{S}$ are given below.

\begin{definition}[{\cite[p.~6]{baez}}]
{\em The octonions}~$\mathbb{O}$ are an eight-dimensional algebra over $\mathbb{R}$, its basis elements being equal to $1,e_1,\dots,e_7$. The involution in~$\mathbb{O}$ is given by the formula
$$
\overline{a_0 + a_1e_1 + \dots + a_7e_7} = a_0 - a_1e_1 - \dots - a_7e_7,
$$
and multiplication is given by Table~\ref{table:octonions-mult}.

\begin{table}[H]
\centering
$
\begin{array}{|c|cccccccc|}
\hline
\times&1&e_1&e_2&e_3&e_4&e_5&e_6&e_7\\\hline
1&1&e_1&e_2&e_3&e_4&e_5&e_6&e_7\\
e_1&e_1&-1& e_3& -e_2& e_5& -e_4& -e_7& e_6\\
e_2&e_2&-e_3& -1& e_1& e_6& e_7& -e_4& -e_5\\
e_3&e_3&e_2& -e_1& -1& e_7& -e_6& e_5& -e_4\\
e_4&e_4&-e_5& -e_6& -e_7& -1& e_1& e_2& e_3\\
e_5&e_5&e_4& -e_7& e_6& -e_1& -1& -e_3& e_2\\
e_6&e_6&e_7& e_4& -e_5& -e_2& e_3& -1& -e_1\\
e_7&e_7&-e_6& e_5& e_4& -e_3& -e_2& e_1& -1\\\hline
\end{array}
$
\caption{\label{table:octonions-mult} Multiplication table of the unit octonions.}
\end{table}
\end{definition}

It is well known that $\mathbb{O} \cong \Main_3$, and thus~$\mathbb{O}$ is a noncommutative nonassociative but alternative algebra without zero divisors, cf.~\cite[p.~10]{baez}. The elements $1, e_1, \dots, e_7$ form an orthonormal basis with respect to the inner product $\langle \cdot, \cdot \rangle$.

The algebra of {\em the sedenions} is defined as $\mathbb{S} = \Main_4$. One can easily see that $\mathbb{S} = \Main_4 = \Main_3 \{ -1 \} \cong \mathbb{O} \{ -1 \}$, and this isomorphism provides the most convenient representation for the sedenions. The sedenions are a noncommutative nonassociative and nonalternative algebra with zero divisors, cf.~\cite[p.~2]{moreno}.

\medskip

It is widely known that complex numbers can be represented by real matrices of order~$2$, and quaternions can be represented either by real matrices of order~$4$ or by complex matrices of order~$2$, as follows:
\begin{align*}
    \mathbb{C} \ni a + bi &\mapsto 
    \begin{pmatrix}
        a & -b\\
        b & a
    \end{pmatrix},\\
    \mathbb{H} \ni a + b i + c j + d k &\mapsto 
    \begin{pmatrix}
        a & -b & -c & -d\\
        b & a & -d & c\\
        c & d & a & -b\\
        d & -c & b & a
    \end{pmatrix},\\
    \mathbb{H} \ni (a + b i) + (c + di) j = x + y j &\mapsto 
    \begin{pmatrix}
        x & y\\
        -\bar{y} & \bar{x}
    \end{pmatrix} = 
    \begin{pmatrix}
        a+bi & c+di\\
        -c+di & a-bi
    \end{pmatrix}.
\end{align*}
However, since the octonions are nonassociative, they cannot be represented in a similar way. In~\cite{tian}, given an arbitrary $x \in \mathbb{O}$, its left and right eight-dimensional real matrix representations were considered, denoted by $\omega(x)$ and $\nu(x)$, respectively. They satisfy the equalities $\overrightarrow{xy} = \omega(x) \overrightarrow{y}$ and $\overrightarrow{yx} = \nu(x) \overrightarrow{y}$, where $\overrightarrow{y} \in \mathbb{R}^8$ is the coordinate vector of an element $y \in \mathbb{O}$. One of their possible applications is to solve linear and matrix equations of certain types over~$\mathbb{O}$.

Another possible way to rewrite the octonions in matrix form uses Zorn vector-matrix algebra. Over an arbitrary field~$\mathbb{F}$, it is defined as
$$
\Zorn(\mathbb{F}) = \left\{
\begin{pmatrix}
a&\mathbf{v}\\
\mathbf{w}&b\\
\end{pmatrix} \; \Big| \; a, b \in \mathbb{F}, \; \mathbf{v}, \mathbf{w} \in \mathbb{F}^3 \right\},
$$
while addition and multiplication are given by
\begin{align*}
\begin{pmatrix}
a&\mathbf{v}\\
\mathbf{w}&b\\
\end{pmatrix}
+
\begin{pmatrix}
a'&\mathbf{v'}\\
\mathbf{w'}&b'\\
\end{pmatrix}
&=
\begin{pmatrix}
a + a' & \mathbf{v} + \mathbf{v'}\\
\mathbf{w} + \mathbf{w'} & b + b'\\
\end{pmatrix},\\
\begin{pmatrix}
a&\mathbf{v}\\
\mathbf{w}&b\\
\end{pmatrix}
\begin{pmatrix}
a'&\mathbf{v'}\\
\mathbf{w'}&b'\\
\end{pmatrix}
&=
\begin{pmatrix}
a a' + \mathbf{v} \cdot \mathbf{w'} & a \mathbf{v'} + b' \mathbf{v} + \mathbf{w} \times \mathbf{w'}\\
a' \mathbf{w} + b \mathbf{w'} - \mathbf{v} \times \mathbf{v'} & b b' + \mathbf{v'} \cdot \mathbf{w}\\
\end{pmatrix},
\end{align*}
where $\cdot$ and $\times$ denote the dot product and the cross product of the elements of $\mathbb{F}^3$, respectively. Involution and norm are also defined on $\Zorn(\mathbb{F})$: 
$$
\text{if } A = \begin{pmatrix}
a&\mathbf{v}\\
\mathbf{w}&b\\
\end{pmatrix}, \text{ then } 
\bar{A} = 
\begin{pmatrix}
b&-\mathbf{v}\\
-\mathbf{w}&a\\
\end{pmatrix}
\text{ and }
n(A) = ab - \mathbf{v} \cdot \mathbf{w}.
$$
If $\A_3$ is a Cayley--Dickson algebra over an arbitrary field~$\mathbb{F}$, $\chrs \mathbb{F} \neq 2$, such that the norm on~$\A_3$ is isotropic (i.e., there exists $a \in \A_3 \setminus \{ 0 \}$ such that $n(a) = 0$), then $\A_3$ is isomorphic to $\Zorn(\mathbb{F})$, see~\cite[Theorem~1.6]{Elduque4} and~\cite[pp.~160 and~166]{mccrimmon}.

The real octonions~$\mathbb{O}$ can be considered as a real subalgebra of the complex octonions $\mathbb{O}_{\mathbb{C}} = \mathbb{C} \otimes_{\mathbb{R}} \mathbb{O}$. The norm on $\mathbb{O}_{\mathbb{C}}$ is isotropic, so there exists an isomorphism $\mathbb{O}_{\mathbb{C}} \cong \Zorn(\mathbb{C})$ which is given explicitly by the formula
$$
\mathbb{O}_{\mathbb{C}} \ni (a+c) + (b+d)e_4 \mapsto \begin{pmatrix}
a-ib & \mathbf{c}+i\mathbf{d}\\
-\mathbf{c}+i\mathbf{d} & a+ib\\
\end{pmatrix},
$$
where $a,b \in \mathbb{C}$ and $c, d \in \spn(e_1,e_2,e_3)$, with $\mathbf{c}, \mathbf{d} \in \mathbb{C}^3$ being their coordinate vectors. Indeed, one can easily verify that such a mapping preserves the multiplication table~\ref{table:octonions-mult} of the unit octonions. Besides, this isomorphism preserves involution, hence it also preserves the norm. We can restrict it to the case when $a,b \in \mathbb{R}$ and $\mathbf{c}, \mathbf{d} \in \mathbb{R}^3$, and thus obtain a representation of~$\mathbb{O}$ as a real subalgebra of~$\Zorn(\mathbb{C})$. Note that this mapping bears an extreme resemblance to the representation of quaternions by complex matrices of order~$2$.

\section{Commutativity graph of the sedenions} \label{section:commutativity-graph}

We now study the commutativity graph of the sedenions. We will need several facts on the center and centralizers which are valid for arbitrary Cayley--Dickson algebras. Note that, by~\cite[Proposition~8.19]{our_anticomm}, the condition $n \leq 3$ is essential in Lemma~\ref{lemma:A_n-commutativity-through-orthogonality}(1).

\begin{lemma}[{\cite[p.~438]{schafer}}]
\leavevmode
\begin{enumerate}[{\rm (1)}]
    \item If $n \leq 1$, then $C_{\A_n} = \A_n$, so the vertex set of $\Gamma_C(\A_n)$ is the empty set.
    \item If $n \geq 2$, then $C_{\A_n} = \mathbb{F}$, so the vertex set of $\Gamma_C(\A_n)$ is $\mathbb{P}(\A_n / \mathbb{F})$.
\end{enumerate}
\end{lemma}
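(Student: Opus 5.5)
This lemma is a classical fact cited from Schafer, and I would prove it by a direct induction on the Cayley--Dickson construction. The plan has two parts: first determine the center $C_{\A_n}$, then read off the vertex set of $\Gamma_C(\A_n)$ from the definition $\mathbb{P}(\A_n/C_{\A_n})$.

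For part (1), I would argue directly. $\A_0=\mathbb{F}$ is commutative. $\A_1=\mathbb{F}\{\gamma_0\}$ has product $(a,b)(c,d)=(ac+\gamma_0 db,\, da+bc)$, since the involution on $\A_0$ is trivial. This expression is visibly symmetric under swapping $(a,b)\leftrightarrow(c,d)$, so $\A_1$ is commutative. Hence $C_{\A_n}=\A_n$, and $\mathbb{P}(\A_n/C_{\A_n})$ has no elements, because its elements require $a\notin C_{\A_n}$.

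For part (2), I would first show $C_{\A_2}=\mathbb{F}$. Take $x=(a,b)\in\A_2$ with $a,b\in\A_1$. Commuting with $(c,0)$ gives $(ac,\, b\bar c)=(ca,\, cb)$, so $b\bar c=bc$ for all $c\in\A_1$. Choosing $c$ with $c\neq\bar c$ forces $b(c-\bar c)=0$. Since $c-\bar c=(0,2\beta)$ is invertible in $\A_1$ (its norm is $-4\gamma_0\beta^2\neq0$), this gives $b=0$. Commuting with $(0,1)$ then gives $(0,a)=(0,\bar a)$, so $a=\bar a\in\mathbb{F}$. Thus $C_{\A_2}=\mathbb{F}$.

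For the induction step $n\ge2$, take $x=(a,b)\in\A_{n+1}$ central.
\begin{itemize}
\item Commuting with $(c,0)$ for all $c\in\A_n$ gives $ac=ca$ and $b\bar c=cb$.
\item Setting $c\in\mathbb{F}$ is harmless; setting $c$ with $t(c)=0$ gives $bc=-cb$ for all traceless $c$.
\item Combined with $b\bar c=cb$ for general $c$, this yields, for traceless $c$ and scalar $\lambda$, that $b(\lambda-c)=(\lambda+c)b$, i.e.\ $bc=-cb$. This only confirms the previous condition, so instead I would also use that $a$ is central in $\A_n$, whence $a\in\mathbb{F}$ by induction.
\item Commuting with $(0,1)$ gives $(\gamma b,\, a)=(\gamma\bar b,\, a)$, so $b=\bar b\in\mathbb{F}$.
\item Then $b\bar c=cb$ reads $b(\bar c-c)=0$ for all $c$; since $\A_n\neq\mathbb{F}$, we get $b=0$.
\end{itemize}
Hence $x\in\mathbb{F}$. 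Conversely, scalars are central, so $C_{\A_{n+1}}=\mathbb{F}$. The vertex set of $\Gamma_C(\A_n)$ is then $\mathbb{P}(\A_n/\mathbb{F})$ by definition.

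The only point needing care is the base case $n=2$, where the induction hypothesis fails because $\A_1$ is commutative. This is why it is handled separately above, using the invertibility of a nonzero traceless element of $\A_1$.
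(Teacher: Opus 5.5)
Your argument is correct. The paper gives no proof of this lemma (it only cites Schafer), so there is no competing route to compare with.

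There is one slip. By the multiplication rule, $(c,0)(a,b)=(ca,\,bc)$, not $(ca,\,cb)$, so commuting with $(c,0)$ gives $b\bar c=bc$ rather than $b\bar c=cb$. In particular, your second bullet's conclusion $bc=-cb$ is not what actually follows; the correct consequence is $bc=0$ for traceless $c$. The slip is harmless. In the base case $\A_1$ is commutative. In the induction step you only genuinely use this equation in the last bullet, where $b$ is already a scalar and the two versions coincide.

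The induction and the separate base case are also unnecessary. For any $n\ge 1$ and $x=(a,b)\in\A_{n+1}$ you have $(a,b)(0,1)=(\gamma_n b,\,a)$ and $(0,1)(a,b)=(\gamma_n\bar b,\,\bar a)$. So centrality already forces $a=\bar a$ and $b=\bar b$, i.e. $a,b\in\mathbb{F}$, with no information about $C_{\A_n}$. Then $b\bar c=bc$ for any $c\in\A_n\setminus\mathbb{F}$ gives $b=0$. This handles $\A_2$ and all higher algebras uniformly, so the difficulty you flag at $n=2$ does not really arise. Your second and third bullets can simply be deleted.
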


\begin{lemma}[{\cite[Lemma~8.11]{our_anticomm}}] \label{lemma:A_n-commutativity-through-orthogonality}
Let $a \in \A_n \setminus \{ 0 \}$, $t(a) = 0$. Then
\begin{enumerate}[{\rm (1)}]
\item if $n(a) = 0$ and $n \leq 3$, then $C_{\A_n}(a) = \mathbb{F} \oplus O_{\A_n}(a)$;
\item if $n(a) \neq 0$, then $C_{\A_n}(a) = \mathbb{F} \oplus \mathbb{F}a \oplus O_{\A_n}(a)$.
\end{enumerate}
\end{lemma}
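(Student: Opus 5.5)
We prove both parts together by reducing the commutativity condition to a product computation. Let $b \in \A_n$ and write $b = \beta + b'$ with $\beta = \frac{t(b)}{2} \in \mathbb{F}$ and $t(b') = 0$. Since scalars commute with everything, $b \in C_{\A_n}(a)$ if and only if $b' \in C_{\A_n}(a)$. So it suffices to describe the elements of zero trace commuting with $a$.

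\textbf{Rewriting commutation as a product condition.} For $x, y$ of zero trace we have $\bar{x} = -x$ and $\bar{y} = -y$. Hence $\overline{xy} = \bar{y}\bar{x} = yx$. By Proposition~\ref{proposition:lambda-form}, $xy + yx = xy + \overline{xy} = t(xy) = -t(x\bar{y}) = -2\langle x, y\rangle$. Therefore $xy - yx = 2xy + 2\langle x,y\rangle$, and so $x$ commutes with $y$ if and only if $xy = -\langle x,y\rangle e_0$.

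\textbf{Forcing a scalar product into an orthogonality condition.} Take $b'$ with $t(b') = 0$ commuting with $a$, and put $\lambda = -\langle a, b'\rangle$, so that $ab' = \lambda$. Multiplying on the left by $a$, and using that $a$ alternates with every element, gives $a(ab') = (aa)b' = -n(a)\, b'$. Here alternativity holds for $n \le 3$. For general $n$ it holds because any element of zero trace satisfies $a(ab) = (aa)b$: in Cayley--Dickson algebras, $L_a L_{\bar a} = L_{n(a)}$ holds for all $a$, which is the standard identity $\bar{a}(ab) = n(a) b$. On the other hand $a(ab') = \lambda a$. Hence $n(a) b' = -\lambda a$.

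\textbf{Case (2), $n(a) \neq 0$.} We get $b' = -\frac{\lambda}{n(a)} a \in \mathbb{F}a$. This alone would suggest $C(a) = \mathbb{F} \oplus \mathbb{F}a$, which is wrong for $n \geq 4$, where $O(a)$ can be nonzero. The identity $\bar{a}(ab) = n(a)b$ in fact holds in all $\A_n$, so if the direct argument yields $b' \in \mathbb{F}a$ then $O(a) = 0$ automatically. I would therefore check this identity carefully. It is the main obstacle: for $n \ge 4$ it is false in general, since otherwise there would be no zero divisors.

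The correct route for general $n$ is to use Lemma~\ref{lemma:A_n-anticomm}. Decompose $b' = \mu a + c$ with $c \perp a$, where $\mu = \langle a,b'\rangle/n(a)$. Since $t(c) = 0$ and $c \perp a$, Lemma~\ref{lemma:A_n-anticomm} gives $ac + ca = 0$. If $b'$ commutes with $a$, then $c$ commutes with $a$ as well. Commuting and anticommuting together give $ac = ca = 0$, so $c \in O_{\A_n}(a)$. Conversely, $\mathbb{F}$, $\mathbb{F}a$ and $O(a)$ all lie in the centralizer. The sum is direct: a scalar has nonzero trace unless it is zero. Any $c \in O(a) \cap \mathbb{F}a$ equals $\nu a$ with $a(\nu a) = -\nu n(a) = 0$, so $\nu = 0$. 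Finally, orthogonal elements have zero trace and are orthogonal to $a$: from $ac = 0$ we get $t(a\bar c) = -t(ac) = 0$ once $t(c) = 0$. To see that $t(c) = 0$, note that $ca = ac = 0$ means $a$ commutes with $c$. Write $c = \gamma + c'$; then $0 = ac = \gamma a + ac'$. Taking traces, and using $t(ac') = -2\langle a,c'\rangle$, shows $c'$ has a component $\gamma$-related to $a$. Substituting back into the decomposition forces $\gamma = 0$ because $n(a) \ne 0$.

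\textbf{Case (1), $n(a)=0$ and $n \le 3$.} Now $a \perp a$, so $a \in \Anc(a)$ by Lemma~\ref{lemma:A_n-anticomm}, and $a^2 = -n(a) = 0$ gives $a \in O(a)$. Any zero-trace $b'$ commuting with $a$ satisfies $ab' = \lambda$ and, by the alternativity computation above, $0 = -n(a) b' = \lambda a$, so $\lambda = 0$. Thus $ab' = 0$, and $b'a = ab' = 0$, giving $b' \in O(a)$. Conversely, $\mathbb{F} + O(a) \subseteq C(a)$. Directness holds because $O(a)$ contains no nonzero scalar. Indeed, for $c = \gamma + c' \in O(a)$, alternativity gives $0 = a(ac) = -n(a)c = 0$, which is vacuous, so instead compute directly: $ac = 0$ yields $\gamma a = -ac'$. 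The traces give $\langle a,c'\rangle = 0$, hence $ac' = -ac'$, and since $ac' = c'a$ this means $ac' = 0$. Therefore $\gamma a = 0$, so $\gamma = 0$.
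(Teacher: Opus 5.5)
The paper does not prove this lemma; it imports it from the earlier work on anticommutativity. So there is no in-paper argument to compare with. Judged on its own, the argument you eventually settle on is a complete proof. The identity $xy+yx=-2\langle x,y\rangle$ for trace-zero $x,y$ turns commutation with $a$ into $ab'=-\langle a,b'\rangle$. In case (2), the splitting $b'=\mu a+c$ with $c\perp a$ makes $c$ both commute and anticommute with $a$ (Lemma~\ref{lemma:A_n-anticomm}), so $c\in O_{\A_n}(a)$ because $\chrs\mathbb{F}\neq 2$. In case (1), alternativity gives $\lambda a=a(ab')=a^2b'=-n(a)b'=0$, hence $ab'=b'a=0$. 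That is exactly where $n\le 3$ enters, in line with the paper's remark that this hypothesis is essential.

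The write-up still needs fixing. You first assert that $a(ab)=(aa)b$, i.e.\ $\bar a(ab)=n(a)b$, holds for every trace-zero $a$ in every $\A_n$, and a few lines later that it is false for $n\ge 4$. Only the second statement is true: take $a\in Z(\mathbb{S})$ and $b\neq 0$ with $ab=0$. So the ``general $n$'' sentence and the line $b'=-\frac{\lambda}{n(a)}a$ in case (2) must be deleted, not left next to their retraction. Alternativity may be used only in case (1).

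The paragraph in case (2) showing that $O_{\A_n}(a)$ consists of trace-zero elements is also garbled. Taking traces gives $c'\perp a$, not a ``component $\gamma$-related to $a$''. The correct completion is the one you give in case (1), and it does not use $n(a)\neq 0$.

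Directness needs none of this. If $\beta+\mu a+c=0$ with $c\in O_{\A_n}(a)$, then $0=ac=-\beta a+\mu n(a)$. Since $1,a$ are linearly independent, $\beta=0$ and $\mu n(a)=0$, so $\mu=0$ and $c=0$. In case (1), a scalar $\gamma\in O_{\A_n}(a)$ satisfies $\gamma a=0$, hence $\gamma=0$.
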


\begin{remark}
If the norm on~$\A_n$ is anisotropic (in particular, if $\A_n = \Main_n$), then any element $a \in \A_n \setminus \{ 0 \}$, $t(a) = 0$, satisfies the conditions of Lemma~\ref{lemma:A_n-commutativity-through-orthogonality}(2).
\end{remark}

\begin{corollary} \label{corollary:small-commutativity-component}
Let $n \geq 2$,~$\A_n$ be a Cayley--Dickson algebra with anisotropic norm, $a \in \A_n \setminus \mathbb{F}$, $\Im(a) \notin Z(\A_n)$. Then the connected component of $\Gamma_C(\A_n)$ which contains~$a$ is a singleton.
\end{corollary}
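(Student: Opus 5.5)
We will show that the vertex $[a]$ has no neighbours in $\Gamma_C(\A_n)$, which makes its component a singleton. Since $n \geq 2$, the center is $C_{\A_n} = \mathbb{F}$, so vertices are classes $\mathbb{F}a + \mathbb{F}$. We may therefore replace $a$ by $\Im(a) = a - \frac{t(a)}{2}$ without changing the vertex. Put $a' = \Im(a)$. Then $a' \neq 0$ because $a \notin \mathbb{F}$, and $t(a') = 0$.

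Because the norm is anisotropic, $n(a') \neq 0$. Lemma~\ref{lemma:A_n-commutativity-through-orthogonality}(2) then gives
$$
C_{\A_n}(a') = \mathbb{F} \oplus \mathbb{F}a' \oplus O_{\A_n}(a').
$$
Since $a' \notin Z(\A_n)$, no nonzero $b$ satisfies $a'b = 0$. Hence $O_{\A_n}(a') = \{0\}$, and $C_{\A_n}(a') = \spn(e_0, a')$.

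Now suppose $b$ commutes with $a$. Then $b$ commutes with $a' = a - \frac{t(a)}{2}e_0$, because scalars are central. So $b \in \spn(e_0, a') = \spn(e_0, a)$, and the vertex $[b + \mathbb{F}]$ coincides with $[a + \mathbb{F}]$ (or $b \in \mathbb{F}$ is not a vertex at all). Thus $[a]$ has no edges to distinct vertices, and its connected component is $\{[a]\}$.

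There is no real obstacle here. The only care needed is the reduction to the imaginary part together with the observation that anisotropy forces $n(a') \neq 0$, so that part (2) of the lemma applies.
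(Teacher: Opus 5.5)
Your proposal is correct and follows essentially the same argument as the paper. Both proofs pass to $\Im(a)$, apply Lemma~\ref{lemma:A_n-commutativity-through-orthogonality}(2) (valid by anisotropy) with $O_{\A_n}(\Im(a)) = \{0\}$, and conclude $C_{\A_n}(a) = \spn(1,a) = \mathbb{F}a + C_{\A_n}$, so $[a + C_{\A_n}]$ has no neighbours.
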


\begin{proof}
It follows from Lemma~\ref{lemma:A_n-commutativity-through-orthogonality} that $C_{\A_n}(a) = C_{\A_n}(\Im(a)) = \spn(1, \Im(a)) = \spn(1,a) = \mathbb{F}a + C_{\A_n}$. Hence the only vertex of the connected component under consideration is $[a + C_{\A_n}] = \mathbb{F}a + C_{\A_n}$.
\end{proof}

Consider the case when $\A_n = \mathbb{S}$. By Lemma~\ref{lemma:A_n-alternative-properties}(1), for any $a \in Z(\mathbb{S})$ we have $t(a) = 0$, so $a = \Im(a)$. In view of Corollary~\ref{corollary:small-commutativity-component}, it is natural to give the following definition:

\begin{definition}
$\Gamma_C^Z(\mathbb{S})$ is the subgraph of $\Gamma_C(\mathbb{S})$ on the vertex set 
$$
\mathbb{P}(Z(\mathbb{S}) + \mathbb{R}) = \{ \mathbb{R}a + \mathbb{R} \; | \; a \in Z(\mathbb{S}) \}.
$$
\end{definition}

Since there exists a simple criterion for an arbitrary sedenion to be a zero divisor, the vertex set of $\Gamma_C^Z(\mathbb{S})$ can be described explicitly.

\begin{proposition}[{\cite[Proposition~12.1]{biss}}] \label{lemma:sedenions-zero-divisor-pairs}
Let $a, b \in \mathbb{O}$, $(a,b) \in \mathbb{S} \setminus \{ 0 \}$. Then $(a,b) \in Z(\mathbb{S})$ if and only if the following conditions are satisfied:
\begin{enumerate}[{\rm (1)}]
\item $n(a) = n(b)$;
\item $1,a,b$ are orthogonal with respect to the inner product $\langle \cdot, \cdot \rangle$.
\end{enumerate}
\end{proposition}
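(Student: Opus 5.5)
The statement splits into two implications. For necessity I would use Lemma~\ref{lemma:A_n-alternative-properties}; for sufficiency I would write down an explicit annihilating element and check it by direct computation, using alternativity of~$\mathbb{O}$.

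\emph{Necessity.} Let $(a,b) \in Z(\mathbb{S})$ be nonzero. Then there is a nonzero $(c,d) \in \mathbb{S}$ with $(a,b)(c,d)=0$ (or $(c,d)(a,b)=0$). By Lemma~\ref{lemma:moreno-zero-divisor-conditions}, each of these implies the other, so both elements are zero divisors and we may assume $(a,b)(c,d)=0$. Since $\mathbb{O}$ is alternative, all of $a,b,c,d$ alternate with one another, so Lemma~\ref{lemma:A_n-alternative-properties} applies with $\gamma_3=-1$:
\begin{itemize}
\item item~(1) gives $t(a)=t(b)=0$, i.e.\ $a,b \perp 1$ by Proposition~\ref{proposition:lambda-form};
\item item~(2) gives $n(a) = -\gamma_3 n(b) = n(b)$;
\item item~(5) gives $a \perp b$.
\end{itemize}
This is exactly conditions (1) and (2).

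\emph{Sufficiency.} Assume $n(a)=n(b)$ and that $1,a,b$ are pairwise orthogonal. Since $(a,b)\neq 0$ and the norm is anisotropic, both $a$ and $b$ are nonzero, so after rescaling $n(a)=n(b)=1$. Then $a,b$ are orthonormal purely imaginary octonions with $a^2=b^2=-1$ and $ab=-ba$ (Lemma~\ref{lemma:A_n-anticomm}). Choose a unit $c \in \spn(1,a,b,ab)^{\perp}$, which is possible since $\dim \mathbb{O} = 8$, and put $d = (ac)b$. The goal is to show that $(a,b)(c,d) = (ac - \bar d b,\; da + b\bar c)$ vanishes; since $(c,d)\neq 0$, this proves $(a,b)\in Z(\mathbb{S})$.

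\emph{First component.} The element $ac$ is imaginary and orthogonal to $b$, because $c \perp ab$ and left multiplication by $a$ preserves $\langle\cdot,\cdot\rangle$ up to the factor $n(a)=1$. Hence $d=(ac)b$ is imaginary and $\bar d=-d$. By alternativity, $(xb)b = x b^2 = -x$, so
$$
\bar d b = -((ac)b)b = ac,
$$
and the first component vanishes.

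\emph{Second component.} Here I need the identity $((ac)b)a = bc$ for an orthonormal triple $a,b,c$ with $c \perp ab$; then $da + b\bar c = ((ac)b)a - bc = 0$. This is the step I expect to be the main obstacle, since it mixes three generators and is not a direct consequence of alternativity. I would first try to derive it from the Moufang identities together with the anticommutation relations among $a,b,c$, which are all pairwise orthogonal imaginary elements. If that gets messy, I would instead reduce to a basis: the automorphism group $G_2$ of $\mathbb{O}$ acts transitively on such triples, so it suffices to take $a=e_1$, $b=e_2$, $c=e_4$. Table~\ref{table:octonions-mult} then gives $ac=e_5$, $e_5e_2=-e_7$, $(-e_7)e_1=e_6$, and $bc=e_2e_4=e_6$, which confirms the identity.
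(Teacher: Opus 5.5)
Your proof is correct. The paper gives no proof of this statement; it imports it from Biss--Dugger--Isaksen [Proposition~12.1]. So your proposal is a self-contained replacement, built from results the paper already quotes.

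\textbf{Necessity.} This is exactly how the paper itself uses Lemma~\ref{lemma:A_n-alternative-properties}: items (1), (2), (5) with $\gamma_3=-1$. Your preliminary reduction via Lemma~\ref{lemma:moreno-zero-divisor-conditions} to the case $(a,b)(c,d)=0$ is genuinely needed, because item (5) only speaks about the left factor.

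\textbf{Sufficiency.} Your annihilator $(c,(ac)b)$ is the same element as the paper's $(c,-(ab)c)$ from Lemma~\ref{lemma:sedenions-annihilators}, since $(ac)b=-(ab)c$ for such triples. That lemma cannot be invoked here, because it presupposes $(a,b)\in Z(\mathbb{S})$. A direct verification like yours is therefore what a self-contained proof requires. Your computations of both components are right, and so is the check with $a=e_1$, $b=e_2$, $c=e_4$.

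\textbf{The step you flagged.} The $G_2$-transitivity reduction is valid; it is the same external input from Moreno that the paper uses before Proposition~\ref{proposition:sedenions-orthonormal-system}. It is not needed, though, because the identity $((ac)b)a=bc$ follows algebraically in two lines.
\begin{itemize}
\item For pairwise orthogonal imaginary $x,y,z$ with $z\perp xy$, one has $(xy)z=-x(yz)$. This follows from the associator being alternating, together with the fact that orthogonal imaginary elements anticommute.
\item Put $u=ac$. By flexibility, $ua=(ac)a=a(ca)=-a(ac)=c$.
\item The element $u$ is imaginary and orthogonal to $a$ and $b$, and $a\perp ub$ by Lemma~\ref{lemma:inner-product-movement} and Corollary~\ref{corollary:pull-out-norm}. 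Applying the anti-associativity to the triples $(u,b,a)$ and $(u,a,b)$ gives $(ub)a=-u(ba)=u(ab)=-(ua)b=-cb=bc$.
\end{itemize}
This keeps the whole argument inside alternativity and the inner-product identities already in the paper, without appealing to the structure of $\mathrm{Aut}(\mathbb{O})$.
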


We now show that $\Gamma_C^Z(\mathbb{S})$ is a connected component of $\Gamma_C(\mathbb{S})$ whose diameter equals three. It was proved earlier (see \cite[Proposition~6.6]{our_sedenions}) that $\Gamma_C^Z(\mathbb{S})$ is connected, and its diameter does not exceed four.

\begin{proposition} \label{proposition:lower-bound}
The diameter of $\Gamma_C^Z(\mathbb{S})$ is at least $3$.
\end{proposition}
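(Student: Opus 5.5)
To prove Proposition~\ref{proposition:lower-bound}, I will exhibit two vertices $u, v$ of $\Gamma_C^Z(\mathbb{S})$ that are not adjacent and have no common neighbour; then $d(u,v) \geq 3$.

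\emph{Reduction to linear algebra.} Every $u \in Z(\mathbb{S})$ satisfies $t(u)=0$ by Lemma~\ref{lemma:A_n-alternative-properties}(1), and $n(u) \neq 0$ because the norm is anisotropic. Hence Lemma~\ref{lemma:A_n-commutativity-through-orthogonality}(2) gives
$$
C_{\mathbb{S}}(u) = \mathbb{R} \oplus \mathbb{R}u \oplus O_{\mathbb{S}}(u).
$$
A vertex of $\Gamma_C$ is a class $\mathbb{R}w + \mathbb{R}$, so we may take representatives with zero trace. Write $V_u = \mathbb{R}u \oplus O_{\mathbb{S}}(u)$ for the trace-zero part of the centralizer; it lies in the trace-zero part of $\mathbb{S}$, since every element of $O_{\mathbb{S}}(u)$ is a zero divisor and so has zero trace. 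Then:
\begin{itemize}
\item $u$ and $v$ are adjacent if and only if $v \in V_u$;
\item a common neighbour is a trace-zero $w \notin \mathbb{R}u \cup \mathbb{R}v$ with $w \in V_u \cap V_v$.
\end{itemize}
It therefore suffices to find zero divisors $u,v$ with $V_u \cap V_v = 0$. This already gives $v \notin V_u$, so $u$ and $v$ are not adjacent.

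\emph{Dimension count.} By \cite{biss} (or \cite[Theorem~6.11]{our_doubly-alternative}), annihilators of sedenion zero divisors are $4$-dimensional, so $\dim V_u = 5$ inside the $15$-dimensional space of trace-zero sedenions. Two generic $5$-dimensional subspaces of a $15$-dimensional space meet trivially, so such a pair should exist. The task is to produce an explicit one.

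\emph{Computing the centralizers.} Both components of a sedenion are octonions and hence alternative, so Corollary~\ref{lemma:double-alternative-annihilators} applies:
$$
O_{\mathbb{S}}((c,d)) = \left\{ \left(a, \frac{(ca)d}{n(c)}\right) \;\bigg|\; t(a)=0,\ \{c,a,d\}=0 \right\}.
$$
\begin{enumerate}[(1)]
\item Take $u=(e_1,e_2)$. It is a zero divisor by Proposition~\ref{lemma:sedenions-zero-divisor-pairs}, since $n(e_1)=n(e_2)$ and $1,e_1,e_2$ are orthogonal. Using Table~\ref{table:octonions-mult}, solve the linear condition $\{e_1,a,e_2\}=0$ for $a \in \spn(e_1,\dots,e_7)$; I expect a $4$-dimensional solution space, spanned by combinations of $e_4,\dots,e_7$. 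This gives an explicit basis of $V_u$.
\item Choose $v$ "in general position" with respect to $u$, for instance $v=(e_1+e_4,\ e_2+e_7)$ suitably adjusted. The requirements are that $n$ of the two components agree and that $1$ and the two components are orthogonal, so that Proposition~\ref{lemma:sedenions-zero-divisor-pairs} applies. Compute $V_v$ in the same way.
\item Write the five basis vectors of $V_u$ and the five of $V_v$ as coordinate vectors in $\mathbb{R}^{15}$. Verify that the resulting $10 \times 15$ matrix has rank $10$, which is equivalent to $V_u \cap V_v = 0$.
\end{enumerate}

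\emph{Main obstacle.} The hardest step is choosing $v$. Natural symmetric choices, such as $v=(e_4,e_5)$, tend to share structure with $u$ (common quaternionic subalgebras), and their spaces $V_u$ and $V_v$ will probably intersect. I would first try mixing basis elements across the two halves of $\mathbb{O}$, as in $v=(e_1+e_4,\ e_2+e_7)$ above. If that fails, I would run the same computation with an Aut$(\mathbb{O})$-rotated copy of $u$, which preserves zero divisors, until the rank check succeeds. Carrying out the octonion products for $\{c,a,d\}$ is routine but error-prone, so I would check it against Lemma~\ref{lemma:A_n-alternative-properties}(6): the first components $a$ of elements of $O_{\mathbb{S}}((c,d))$ must lie in $\spn(e_0,c,d,cd)^{\perp}$.
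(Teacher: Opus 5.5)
Your reduction is sound and is the one the paper uses: since $C_{\mathbb{S}}(u)=\mathbb{R}\oplus\mathbb{R}u\oplus O_{\mathbb{S}}(u)$ for every $u\in Z(\mathbb{S})$, it suffices to exhibit zero divisors $u,v$ with $V_u\cap V_v=0$. The gap is that you never exhibit such a pair, and that existence claim is the whole content of the proposition. The dimension count cannot replace it. The spaces $V_u$ form a thin, special family, not generic $5$-dimensional subspaces of the trace-zero part. By Lemma~\ref{lemma:length-two-path}, distance at most two is cut out by three explicit equations, so you must check that a specific pair violates them.

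Your concrete candidate in fact fails. For $v=(e_1+e_4,e_2+e_7)$ one computes $(e_1+e_4)(e_2+e_7)=e_3+e_6-e_6+e_3=2e_3$, which has the same direction as $e_1e_2=e_3$. By Lemma~\ref{lemma:sedenions-annihilators}, both $O_{\mathbb{S}}(u)$ and $O_{\mathbb{S}}(v)$ then consist of pairs $(x,-e_3x)$. The constraints are $x\perp 1,e_1,e_2,e_3$ and $x\perp 1,e_1+e_4,e_2+e_7,e_3$ respectively, so both contain $(e_5,e_6)$ and $(e_6,-e_5)$. One checks directly that $(e_1,e_2)(e_5,e_6)=(e_1+e_4,e_2+e_7)(e_5,e_6)=0$. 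Hence $u$ and $v$ have a common neighbour and $d(u,v)\le 2$; after normalization this is exactly the case $ab=a'b'$ in the proof of Lemma~\ref{lemma:length-two-path}.

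The missing idea is the right choice of $v$, and it is simpler than you feared: the paper takes $v=(b,a)$, the swap of $u=(a,b)$. Normalize $n(a)=n(b)=1$ and use $ba=-ab$. Lemma~\ref{lemma:sedenions-annihilators} gives $O_{\mathbb{S}}((a,b))=\{(x,-(ab)x)\}$ and $O_{\mathbb{S}}((b,a))=\{(x,(ab)x)\}$, both over $x\perp 1,a,b,ab$. Suppose $\alpha(a,b)+(x,-(ab)x)=\beta(b,a)+(y,(ab)y)$. Splitting first components along $\spn(a,b)\oplus\spn(a,b)^{\perp}$ gives $\alpha=\beta=0$ and $x=y$. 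The second components then give $(ab)x=0$, so $x=0$, and therefore $V_u\cap V_v=0$. The paper reaches the same conclusion through the explicit bases from the double hexagon and its linear independence (Theorem~\ref{theorem:double-hexagon}).
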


\begin{proof}
Consider $(a,b),(c,d) \in Z(\mathbb{S})$, $(a,b)(c,d) = 0$. Then $(a,b),(c,d)$ satisfy the conditions of Theorem~\ref{theorem:double-hexagon}, and thus they are contained in a double hexagon from Figure~\ref{figure:double-hexagon}(b), all of whose vertices are linearly independent. By~\cite[p.~25]{moreno}, the orthogonalizer of an arbitrary zero divisor in~$\mathbb{S}$ has dimension~$4$. It follows that
\begin{align*}
O_{\mathbb{S}}((a,b)) &= \spn((c,d), (d,-c), (ad,ac), (ac,-ad)),\\
O_{\mathbb{S}}((b,a)) &= \spn((d,c), (c,-d), (ac,ad), (ad,-ac)).
\end{align*}
By Lemma~\ref{lemma:A_n-commutativity-through-orthogonality}(2),
\begin{align*}
C_{\mathbb{S}}((a,b)) &= \spn(1,(a,b)) \oplus O_{\mathbb{S}}((a,b)) = \spn(1, (a,b), (c,d), (d,-c), (ad,ac), (ac,-ad)),\\
C_{\mathbb{S}}((b,a)) &= \spn(1,(b,a)) \oplus O_{\mathbb{S}}((b,a)) = \spn(1, (b,a), (d,c), (c,-d), (ac,ad), (ad,-ac)).
\end{align*}
Then, by Theorem~\ref{theorem:double-hexagon}\eqref{item:linear-independence}, $C_{\mathbb{S}}((a,b)) \cap C_{\mathbb{S}}((b,a)) = \mathbb{R}$, so $d_{\Gamma_C(\mathbb{S})}((a,b),(b,a)) \geq 3$.
\end{proof}

Since~$\mathbb{O}$ is alternative, any pair of zero divisors $(a,b),(c,d) \in \mathbb{S}$, $(a,b)(c,d) = 0$, satisfies the conditions of Lemma~\ref{lemma:A_n-alternative-properties}. In particular, we can multiply $(a,b)$ and $(c,d)$ by certain nonzero real numbers and achieve that $n(a) = n(b) = n(c) = n(d) = 1$. Then, by~\cite[Corollary~2.12 and Theorem~2.13]{moreno}, there exists an automorphism $\phi$ of~$\mathbb{O}$ which maps $a,b,c,d$ to $e_1,e_2,e_7,e_4$, respectively. It can be extended to an automorphism of~$\mathbb{S}$ by the formula $(x,y) \mapsto (\phi(x),\phi(y))$. Then $(a,b)$ and $(c,d)$ are mapped to $(e_1,e_2)$ and $(e_7,e_4)$. Hence we can always replace $(a,b)$ and $(c,d)$ with $(e_1,e_2)$ and $(e_7,e_4)$. We immediately obtain the following proposition.

\begin{proposition} \label{proposition:sedenions-orthonormal-system}
Let $(a,b),(c,d) \in Z(\mathbb{S})$, $(a,b)(c,d) = 0$, $n(a) = n(b) = n(c) = n(d) = 1$. Then $1,a,b,c,d,ab,ac,ad$ form an orthonormal system with respect to the inner product $\langle \cdot, \cdot \rangle$.
\end{proposition}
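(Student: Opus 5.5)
The plan is to reduce to the standard configuration described just before the statement. By the hypotheses and the discussion preceding the proposition, there is an automorphism $\phi$ of $\mathbb{O}$ sending $a,b,c,d$ to $e_1,e_2,e_7,e_4$. It extends to the automorphism $(x,y)\mapsto(\phi(x),\phi(y))$ of $\mathbb{S}$. An automorphism of $\mathbb{O}$ fixes $1$ and commutes with the involution, because the involution is determined by the trace. It therefore preserves the norm and the bilinear form $\langle\cdot,\cdot\rangle$. Consequently the system $1,a,b,c,d,ab,ac,ad$ is orthonormal if and only if its image $1,e_1,e_2,e_7,e_4,\phi(a)\phi(b),\phi(a)\phi(c),\phi(a)\phi(d)$ is, where I use $\phi(xy)=\phi(x)\phi(y)$.

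In the standard configuration I compute the products directly from Table~\ref{table:octonions-mult}. The row of $e_1$ gives $e_1e_2=e_3$, $e_1e_7=e_6$, and $e_1e_4=e_5$. The system becomes $1,e_1,e_2,e_7,e_4,e_3,e_6,e_5$, which is a permutation of the standard basis. Since $1,e_1,\dots,e_7$ is orthonormal for $\langle\cdot,\cdot\rangle$, the claim follows.

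Before this, I will check that the standard pair is consistent with the setup, i.e.\ that $(e_1,e_2)(e_7,e_4)=0$ in $\mathbb{O}\{-1\}$. This is a sanity check rather than part of the logic, since the existence of $\phi$ is quoted from Moreno. The product formula gives
\[
(e_1e_7-\bar e_4e_2,\; e_4e_1+e_2\bar e_7)=(e_6+e_4e_2,\; -e_5-e_2e_7).
\]
The table gives $e_4e_2=-e_6$ and $e_2e_7=-e_5$, so the product is $(0,0)$, as required.

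The only real obstacle is the justification that $\phi$ exists. This needs $a,b,c,d$ to have unit norm and to be mutually orthogonal, with $c$ orthogonal to $ab$ and $d=\pm$ the appropriate product. These facts come from Lemma~\ref{lemma:A_n-alternative-properties}(1),(5),(6) together with the cited result of Moreno, and I take them as given from the preceding paragraph. If I wanted to avoid the automorphism, I would argue directly instead. Lemma~\ref{lemma:A_n-alternative-properties} and Theorem~\ref{theorem:double-hexagon}\eqref{item:orthonormal-system} give orthogonality of $1,a,b,c,d,ac,ad$. The facts involving $ab$ would follow from identities such as $\langle ab,ac\rangle=n(a)\langle b,c\rangle=0$ and $\langle ab,c\rangle=\langle b,\bar a c\rangle$. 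Unit norms follow from the multiplicativity of the norm in $\mathbb{O}$.
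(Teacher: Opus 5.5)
Your proof is correct and follows the paper's own argument. The paper likewise invokes Moreno's result to obtain an automorphism of $\mathbb{O}$ sending $a,b,c,d$ to $e_1,e_2,e_7,e_4$, extends it componentwise to $\mathbb{S}$, and reads off $e_1e_2=e_3$, $e_1e_7=e_6$, $e_1e_4=e_5$. This yields the orthonormal basis $1,e_1,e_2,e_7,e_4,e_3,e_6,e_5$. Your extra remarks (that automorphisms of $\mathbb{O}$ preserve $\langle\cdot,\cdot\rangle$, the check $(e_1,e_2)(e_7,e_4)=0$, and the direct fallback via Theorem~\ref{theorem:double-hexagon}\eqref{item:orthonormal-system} and Lemma~\ref{lemma:inner-product-movement}) are all valid and only make explicit what the paper leaves implicit.
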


By using this proposition and Corollary~\ref{lemma:double-alternative-annihilators}, one can easily find an explicit form of the orthogonalizer for an arbitrary zero divisor in~$\mathbb{S}$. It has already been obtained by Biss, Dugger, and Isaksen~\cite{biss}.

\begin{lemma}[{\cite[Corollary~11.2]{biss}}] \label{lemma:sedenions-annihilators}
Let $(a,b) \in Z(\mathbb{S})$. Then
$$
O_{\mathbb{S}}((a,b)) = \left\{ \left(c, -\dfrac{(ab)c}{n(a)} \right) \; \bigg| \; c \perp 1, a, b, ab \right\}.
$$
\end{lemma}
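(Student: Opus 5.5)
The plan is to start from Corollary~\ref{lemma:double-alternative-annihilators} with the roles of the letters renamed. The zero divisor is $(a,b)$, and we look for its orthogonal partners $(c,x)$. Since $\mathbb{O}$ is alternative, $a$ and $b$ are alternative, and the corollary gives
$$
O_{\mathbb{S}}((a,b)) = \left\{ \left(c, \dfrac{(ac)b}{n(a)} \right) \; \bigg| \; t(c) = 0, \: \{ a,c,b \} = 0 \right\}.
$$
Two things then remain. First, the condition $t(c)=0,\ \{a,c,b\}=0$ must be equivalent to $c \perp 1,a,b,ab$. Second, under this condition $(ac)b = -(ab)c$ must hold.

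For both points, we may scale $(a,b)$ so that $n(a)=n(b)=1$; this is legitimate because both sides of the claimed equality are invariant under real scaling of $(a,b)$. Proposition~\ref{lemma:sedenions-zero-divisor-pairs} tells us that $1,a,b$ are orthonormal. Then $1,a,b,ab$ span a quaternion subalgebra $H$ of $\mathbb{O}$, and $\mathbb{O} = H \oplus H^{\perp}$. Alternatively, Moreno's automorphism argument lets us take $a=e_1$, $b=e_2$ and $ab=e_3$.

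We then decompose $c = h + h'$ with $h \in H$ and $h' \in H^{\perp}$, and compute $\{a,c,b\}$ on each part.

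On $H^\perp = H e_4$, the standard octonion identities apply: for $p,q \in H$ we have $p(qe_4) = (qp)e_4$ and $(qe_4)p = (q\bar p)e_4$. Writing $h' = q e_4$, we get $(ah')b = (q\bar a\, \bar b)\ldots$; more precisely, a direct check with $a=e_1$, $b=e_2$ on $e_4,\dots,e_7$ using Table~\ref{table:octonions-mult} shows that $(ah')b = -a(h'b)$. So the anti-associator vanishes identically on $H^\perp$.

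On $H$, the algebra is associative, so $\{a,h,b\} = 2ahb$. This vanishes only for $h=0$, because $H$ has no zero divisors.

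Hence the condition $t(c)=0,\ \{a,c,b\}=0$ is exactly $c \in H^\perp$, which is the same as $c \perp 1,a,b,ab$. The dimension count is consistent with Moreno's dimension $4$.

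It remains to check that $(ac)b = -(ab)c$ for $c\in H^\perp$. This is again a finite check on the basis $e_4,\dots,e_7$, for instance $(e_1e_4)e_2 = e_5e_2 = e_7$ and $-(e_1e_2)e_4 = -e_3e_4 = e_7$. Equivalently, it follows from the same identities $p(qe_4)=(qp)e_4$ and $(qe_4)p=(q\bar p)e_4$ together with $\bar a=-a$ and $\bar b=-b$.

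The main obstacle is making the $H^\perp$ computations clean without coordinates. The fallback is the reduction to $a=e_1$, $b=e_2$, after which everything is a finite table lookup. For that reduction, the scaling step and the fact that an automorphism of $\mathbb{O}$ sending an orthonormal pair $(a,b)$ to $(e_1,e_2)$ exists (a standard transitivity property of $G_2$, contained in the cited results of Moreno) have to be justified.
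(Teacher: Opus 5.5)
Your argument is correct. It starts where the paper points, namely Corollary~\ref{lemma:double-alternative-annihilators} with the letters renamed, but it handles the key step differently.

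The paper does not write out a proof. It cites Biss--Dugger--Isaksen and indicates that the formula follows from that corollary together with Proposition~\ref{proposition:sedenions-orthonormal-system}. Along that route, orthonormality of $1,a,b,c,d,ab,ac,ad$ (after normalizing) gives $c\perp 1,a,b,ab$ for every $(c,d)\in O_{\mathbb{S}}((a,b))$. This is only one inclusion; equality has to come from elsewhere, for instance Moreno's count $\dim O_{\mathbb{S}}((a,b))=4$. The orthonormality itself rests on Moreno's result that an automorphism of $\mathbb{O}$ moves $a,b,c,d$ to $e_1,e_2,e_7,e_4$.

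You instead solve $\{a,c,b\}=0$ directly on the splitting $\mathbb{O}=H\oplus H^{\perp}$. This gives both inclusions at once and produces the dimension $4$ as a by-product rather than an input. It needs only the doubling $\mathbb{O}=H\oplus H\ell$, or transitivity on orthonormal pairs, which is weaker than transitivity on quadruples. The paper's route is shorter given the results it has already quoted; yours is more self-contained.

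The coordinate-free computation you flag as the main obstacle takes one line each. Write $c=q\ell$ with $q\in H$ and $\ell$ a unit vector in $H^{\perp}$. The rules $p(q\ell)=(qp)\ell$ and $(q\ell)p=(q\bar p)\ell$ give $(ac)b=((qa)\bar b)\ell=-(qab)\ell$ and $a(cb)=-(qba)\ell$. Your draft's $q\bar a\,\bar b$ is wrong here. Hence $\{a,c,b\}=-(q(ab+ba))\ell=0$ and $(ab)c=(qab)\ell=-(ac)b$. The last identity also follows from alternativity alone: $b$ and $c$ anticommute, so $0=[a,b,c]+[a,c,b]=(ab)c+(ac)b$.

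Also fix your sample check. By Table~\ref{table:octonions-mult}, $e_5e_2=-e_7$ and $e_3e_4=e_7$, so both sides equal $-e_7$, not $e_7$. The identity itself is unaffected.
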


We will also need the following lemma which is valid in an arbitrary Cayley--Dickson algebra.

\begin{lemma}[{\cite[Lemma~6]{schafer}}] \label{lemma:inner-product-movement}
For all $x,y,z \in \A_n$ we have $\langle x, yz \rangle = \langle x\bar{z}, y \rangle = \langle \bar{y}x, z \rangle$.
\end{lemma}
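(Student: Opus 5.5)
The plan is to reduce everything to two trace identities and then read off the statement from the formula $2\langle a,b \rangle = t(a\bar{b})$ of Proposition~\ref{proposition:lambda-form}. These identities are $t(uv) = t(vu)$ and $t((uv)w) = t(u(vw))$ for all $u,v,w \in \A_n$. Together they say that $t$ of a product of three elements is independent of the bracketing and invariant under cyclic permutations of the factors. Alongside them I will use the standard facts $t(\bar{u}) = t(u)$ and $\overline{uv} = \bar{v}\bar{u}$; both follow by an immediate induction from the definition of the involution on $\A \{ \gamma \}$.

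Assuming the two identities, the computation is short. First,
$2\langle x, yz \rangle = t(x\,\overline{yz}) = t(x(\bar{z}\bar{y}))$. By trace associativity this equals $t((x\bar{z})\bar{y}) = 2\langle x\bar{z}, y \rangle$.
For the other equality, $2\langle \bar{y}x, z \rangle = t((\bar{y}x)\bar{z})$. Trace associativity turns this into $t(\bar{y}(x\bar{z}))$, and trace commutativity then gives $t((x\bar{z})\bar{y})$. This is the same quantity as before, which proves the lemma.

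Both trace identities are proved by induction on $n$, using the formula $t((a,b)) = t(a)$ from Proposition~\ref{proposition:real-cayley-dickson-properties}. The base case $\A_0 = \mathbb{F}$ is trivial.

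For commutativity, the first component of $(a,b)(c,d)$ is $ac + \gamma \bar{d}b$ and that of $(c,d)(a,b)$ is $ca + \gamma \bar{b}d$. By the inductive hypothesis $t(ac) = t(ca)$. Moreover $t(\bar{d}b) = t(\overline{\bar{b}d}) = t(\bar{b}d)$, so the traces agree.

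For associativity I take $u = (a,b)$, $v = (c,d)$, $w = (e,f)$ and compute only the first components of $(uv)w$ and $u(vw)$. They are
\begin{align*}
&(ac)e + \gamma (\bar{d}b)e + \gamma \bar{f}(da) + \gamma \bar{f}(b\bar{c}),\\
&a(ce) + \gamma a(\bar{f}d) + \gamma (\bar{e}\bar{d})b + \gamma (\bar{c}\bar{f})b.
\end{align*}
Applying $t$, I match the terms pairwise using the inductive hypothesis, i.e.\ trace associativity and cyclicity in $\A_n$:
\begin{itemize}
\item $t((ac)e) = t(a(ce))$;
\item $t((\bar{d}b)e)$ and $t((\bar{e}\bar{d})b)$ are both cyclic rearrangements of $t(\bar{e}\bar{d}b)$, provided one first conjugates where needed;
\item $t(\bar{f}(da))$ and $t(a(\bar{f}d))$ are cyclic rearrangements of each other;
\item $t(\bar{f}(b\bar{c}))$ and $t((\bar{c}\bar{f})b)$ are cyclic rearrangements of each other.
\end{itemize}

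The only real obstacle is this bookkeeping: getting the conjugations in the Cayley--Dickson product right and checking that each pair is genuinely related by a cyclic permutation, possibly after applying $t(\bar{u}) = t(u)$. This is where I expect to spend the effort. The rest is mechanical.
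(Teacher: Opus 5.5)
Your reduction is sound. Given $t(uv)=t(vu)$ and $t((uv)w)=t(u(vw))$, both equalities of the lemma follow exactly as you compute. Your inductive proof of $t(uv)=t(vu)$ is also correct; it even follows directly from Proposition~\ref{proposition:lambda-form}, since $t(uv)=2\langle u,\bar{v}\rangle=2\langle \bar{u},v\rangle=t(vu)$. The paper gives no proof of its own here and only cites Schafer, so the only thing left to check is your associativity step.

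That step has a conjugation error. Since $vw=(ce+\gamma\bar{f}d,\ fc+d\bar{e})$, the first component of $u(vw)$ contains $\gamma\,\overline{(fc+d\bar{e})}\,b=\gamma(\bar{c}\bar{f})b+\gamma(e\bar{d})b$, because $\overline{d\bar{e}}=\bar{\bar{e}}\,\bar{d}=e\bar{d}$. You wrote $\gamma(\bar{e}\bar{d})b$ instead. With your term the second bullet is false, and no conjugation repairs it: in $\mathbb{C}=\Main_1$ with $b=1$ and $d=e=i$, one has $t((\bar{d}b)e)=t(1)=2$ but $t((\bar{e}\bar{d})b)=t(-1)=-2$. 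So, as written, the traces of the two first components do not match and the inductive step fails.

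With the correct term the repair is immediate and needs no conjugation. The inductive hypothesis gives $t((\bar{d}b)e)=t(e(\bar{d}b))=t((e\bar{d})b)$. Your other three matchings are correct: $t((ac)e)=t(a(ce))$, $t(\bar{f}(da))=t(a(\bar{f}d))$, and $t(\bar{f}(b\bar{c}))=t((\bar{c}\bar{f})b)$. After this correction the induction, and hence your proof of the lemma, goes through.
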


\begin{corollary} \label{corollary:pull-out-norm}
Let $x,y,z \in \A_n$, and the element $z$ alternates with $x$. Then $\langle xz, yz \rangle = \langle zx, zy \rangle = n(z) \langle x, y \rangle$. 
\end{corollary}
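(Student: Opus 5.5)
We have to prove Corollary~\ref{corollary:pull-out-norm}: if $z$ alternates with $x$, then $\langle xz, yz \rangle = \langle zx, zy \rangle = n(z)\langle x,y\rangle$. The plan is to move $z$ across the inner product with Lemma~\ref{lemma:inner-product-movement}, and then collapse the resulting product using alternativity together with $z\bar z = n(z)$.

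For the right-hand version, apply Lemma~\ref{lemma:inner-product-movement} with $x \mapsto xz$ and $yz$ in the second slot:
$$
\langle xz, yz\rangle = \langle (xz)\bar z, y\rangle .
$$
Write $\bar z = t(z) - z$, using $t(z)\in\mathbb{F}$. Then
$$
(xz)\bar z = t(z)\,xz - (xz)z .
$$
Next we use the hypothesis $[z,z,x]=0$. By flexibility, $[x,z,z] = -[z,z,x] = 0$, so $(xz)z = x(zz)$. This gives
$$
(xz)\bar z = x\bigl(t(z)z - z^2\bigr) = x(z\bar z) = n(z)\,x,
$$
where the middle equality is $z\bar z = z(t(z)-z)$. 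Hence $\langle xz,yz\rangle = n(z)\langle x,y\rangle$.

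For the left-hand version, apply the other equality of Lemma~\ref{lemma:inner-product-movement}:
$$
\langle zx, zy\rangle = \langle \bar z(zx), y\rangle .
$$
Expanding $\bar z(zx) = t(z)\,zx - z(zx)$ and using $z(zx) = (zz)x$ (which is exactly $[z,z,x]=0$), we get $\bar z(zx) = (z\bar z)x = n(z)\,x$. The conclusion follows as before.

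The only point needing care is the direction of the alternation. We are given $[z,z,x]=0$, and the right-multiplication case needs $[x,z,z]=0$. Flexibility of Cayley--Dickson algebras supplies $[a,b,c] = -[c,b,a]$, which converts one into the other. Everything else is bilinear bookkeeping with scalar traces and norms.
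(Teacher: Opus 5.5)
Your proof is correct and follows the paper's argument essentially verbatim: move $z$ across the inner product with Lemma~\ref{lemma:inner-product-movement}, write $\bar z = t(z) - z$, and collapse the product using the alternation hypothesis together with $z\bar z = n(z)$. Your explicit use of flexibility to pass from $[z,z,x]=0$ to $[x,z,z]=0$ in the right-multiplication case fills in a step that the paper uses without comment.
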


\begin{proof}
By Lemma~\ref{lemma:inner-product-movement}, 
\begin{align*}
\langle xz, yz \rangle &= \langle (xz)\bar{z}, y \rangle = \langle (xz)(t(z) - z), y \rangle \\
&= \langle x(z(t(z) - z)), y \rangle = \langle x(z\bar{z}), y \rangle \\
&= \langle x \cdot n(z), y \rangle = n(z) \langle x, y \rangle.    
\end{align*}
The second equality is proved similarly.
\end{proof}

\begin{lemma} \label{lemma:length-two-path}
Let $(a,b), (a', b') \in Z(\mathbb{S})$. Then the distance between $(a,b)$ and $(a',b')$ in $\Gamma_C(\mathbb{S})$ is at most two
if and only if the following three equalities hold:
\begin{equation} \label{equation:length-two-path}
\begin{cases}
\langle a, a' \rangle - \langle b, b' \rangle = 0,\\
\langle a, b' \rangle + \langle b, a' \rangle = 0,\\
\begin{vmatrix}
\langle a, a'b' \rangle & \langle a', ab \rangle\\
\langle b, a'b' \rangle & \langle b', ab \rangle
\end{vmatrix} = 0.
\end{cases}
\end{equation}
\end{lemma}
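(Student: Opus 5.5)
The plan is to turn the graph-theoretic condition into a linear-algebra condition on two $5$-dimensional subspaces of $\Im(\mathbb{S})$, and then decide when they meet nontrivially.

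\emph{Reformulation.} Put $x=(a,b)$ and $y=(a',b')$. In $\Gamma_C(\mathbb{S})$ we have $d(x,y)\le 2$ exactly when there is a non-scalar $z$ commuting with both $x$ and $y$. This also covers $d\le1$: if $x,y$ commute, take $z=x$. Since $C_{\mathbb{S}}(x)$ and $C_{\mathbb{S}}(y)$ contain $\mathbb{R}$, we may replace $z$ by $\Im(z)$. By Lemma~\ref{lemma:A_n-commutativity-through-orthogonality}(2), $C_{\mathbb{S}}(x)=\mathbb{R}\oplus V_x$ with $V_x=\mathbb{R}x\oplus O_{\mathbb{S}}(x)$, and likewise for $y$. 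Hence
$$
d(x,y)\le 2 \iff V_x\cap V_y\neq 0 .
$$
Both $V_x$ and $V_y$ are $5$-dimensional subspaces of the $15$-dimensional space $\Im(\mathbb{S})$, so generically they meet only in $0$. The statement therefore amounts to saying that a rank drop occurs exactly under the three conditions~\eqref{equation:length-two-path}.

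\emph{Describing $V_x$ as a graph.} By Lemma~\ref{lemma:sedenions-annihilators}, after normalising $n(a)=n(b)=1$,
$$
V_x=\{\lambda(a,b)+(c,-(ab)c)\;|\;\lambda\in\mathbb{R},\ c\perp 1,a,b,ab\}.
$$
Since $a\perp b$ are unit imaginary octonions and $\mathbb{O}$ is alternative, $(ab)a=b$. So with $p=c+\lambda a$ we get $V_x=\{(p,L_x p)\}$, where $p$ ranges over $W_x=\spn(1,b,ab)^{\perp}$ and $L_x p=2\langle p,a\rangle b-(ab)p$. Thus $V_x$ is the graph of a linear map $W_x\to\mathbb{O}$. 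Consequently
$$
V_x\cap V_y\neq0 \iff \text{there is } 0\neq p\in W_x\cap W_y \text{ with } L_x p=L_y p .
$$
To simplify computations, I would apply the automorphism from the discussion before Proposition~\ref{proposition:sedenions-orthonormal-system} to fix $(a,b)=(e_1,e_2)$, so that $ab=e_3$. The element $(a',b')$ stays arbitrary, subject only to Proposition~\ref{lemma:sedenions-zero-divisor-pairs}.

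\emph{Deciding the intersection.} Rather than solving directly, I would work with orthogonal complements in $\Im(\mathbb{S})$: $V_x\cap V_y\neq0$ iff $V_x^{\perp}+V_y^{\perp}\neq\Im(\mathbb{S})$. Equivalently, the $10\times 15$ matrix formed from bases of $V_x$ and $V_y$ has rank less than $10$.

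The natural spanning vectors are $x$, $\tilde x$-type vectors such as $(b,-a)$, and the images of an orthonormal basis of $\spn(1,a,b,ab)^{\perp}$. The Gram entries between the two bases should reduce, via Lemma~\ref{lemma:inner-product-movement} and Corollary~\ref{corollary:pull-out-norm}, to the scalars $\langle a,a'\rangle$, $\langle b,b'\rangle$, $\langle a,b'\rangle$, $\langle b,a'\rangle$, $\langle a,a'b'\rangle$, $\langle b,a'b'\rangle$, $\langle a',ab\rangle$, $\langle b',ab\rangle$.

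I expect the first two equations of~\eqref{equation:length-two-path} to be the conditions $\langle x,\bar y'\rangle$-type orthogonalities, namely $\langle (a,b),(a',-b')\rangle=0$ and $\langle (a,b),(b',a')\rangle=0$. For sufficiency I would exhibit $z$ explicitly: take $p\in W_x\cap W_y$, which has dimension at least $3$, and solve $L_xp=L_yp$ as a small linear system. The $2\times2$ determinant should appear as the solvability condition for the remaining two unknowns, namely the components of $p$ along $a$ and $a'$.

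\emph{Main obstacle.} The hardest step is the rank computation. One must show that, after the two linear conditions are imposed, the system $L_xp=L_yp$ on $W_x\cap W_y$ degenerates exactly when the determinant vanishes, and that without the two linear conditions no solution exists. The first thing I would try is to decompose $p$ along $a$, $a'$, and the common part of $\spn(1,a,b,ab,a',b',a'b')^{\perp}$. On that common part, $L_x-L_y$ acts as left multiplication by $a'b'-ab$, which is injective unless $ab=a'b'$ since $\mathbb{O}$ has no zero divisors. This should force the common component to vanish, leaving a $2$-variable system whose compatibility gives the determinant.
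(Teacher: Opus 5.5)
Your reformulation is sound, and it is essentially the paper's own setup. Your $p$ is the first coordinate of the middle vertex, and $\langle p,a\rangle$, $\langle p,a'\rangle$ are the paper's $\alpha$, $\beta$. The condition $L_xp=L_yp$ is exactly $(a'b'-ab)p=2\langle p,a'\rangle b'-2\langle p,a\rangle b$, which is Eq.~\eqref{equation:condition-on-c-2} rewritten with $p=c+\alpha a$.

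\textbf{The gap.} The lemma \emph{is} the computation that comes after this, and you never carry it out. The Gram-matrix reduction, the two linear conditions and the determinant are only predicted ("should reduce", "I expect", "should appear").

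Moreover, the one concrete mechanism you propose is wrong. The equation above is inhomogeneous. So injectivity of $u\mapsto(a'b'-ab)u$ on $\spn(1,a,b,ab,a',b',a'b')^{\perp}$ does not force the component of $p$ there to vanish; it only says that component is determined by the rest. Also, $a$ and $a'$ generally do not lie in $W_x\cap W_y=\spn(1,b,ab,b',a'b')^{\perp}$. Hence "$p$ along $a$, $a'$, plus the common part" is not a decomposition of the space you work in.

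Concretely, take $(a,b)=(e_1,e_2)$ and $(a',b')=(e_4,e_5)$.
\begin{itemize}
\item Here $ab=e_3\neq e_1=a'b'$, and all three conditions of the lemma hold.
\item $V_x\cap V_y$ is spanned by $z=(e_4-e_6,\,e_5-e_7)$. Indeed $(e_1,e_2)z=0$, and $z-(e_4,e_5)=-(e_6,e_7)\in O_{\mathbb{S}}((e_4,e_5))$.
\item Then $p=e_4-e_6$ has the nonzero component $-e_6$ in $\spn(1,a,b,ab,a',b',a'b')^{\perp}=\spn(e_6,e_7)$.
\item Also $a=e_1\notin W_y$.
\end{itemize}

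\textbf{How to repair it.} Use the invertibility of $a'b'-ab$ on all of $\mathbb{O}$.
\begin{itemize}
\item For $ab\neq a'b'$, the whole of $p$ is determined by the two scalars $\alpha=\langle p,a\rangle$, $\beta=\langle p,a'\rangle$ via $(a'b'-ab)p=2\beta b'-2\alpha b$.
\item It remains to impose seven linear conditions on $(\alpha,\beta)$: $p\perp 1,b,ab,b',a'b'$, $\langle p,a\rangle=\alpha$, and $\langle p,a'\rangle=\beta$.
\item Test each one through $n(a'b'-ab)\langle p,x\rangle=\langle 2\beta b'-2\alpha b,(a'b'-ab)x\rangle$ (Corollary~\ref{corollary:pull-out-norm}).
\item Lemma~\ref{lemma:inner-product-movement}, together with orthonormality of $1,a,b,ab$ and of $1,a',b',a'b'$, reduces them to:
  \begin{itemize}
  \item $\beta(\langle a,a'\rangle-\langle b,b'\rangle)=0$ and $\beta(\langle a,b'\rangle+\langle b,a'\rangle)=0$;
  \item the same two equations with $\alpha$ in place of $\beta$;
  \item $\alpha\langle b,a'b'\rangle+\beta\langle b',ab\rangle=0$ and $\alpha\langle a,a'b'\rangle+\beta\langle a',ab\rangle=0$.
  \end{itemize}
\item A nonzero $(\alpha,\beta)$ exists iff the three stated conditions hold. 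This is the paper's argument, written in the variable $c=p-\alpha a$.
\end{itemize}

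\textbf{The missing case $ab=a'b'$.} You omit this case entirely. There $d\le 2$ always: take $z=(c,-(ab)c)$ with $0\neq c\in\spn(1,a,b,a',b',ab)^{\perp}$. So for the ``only if'' direction you must check that the three conditions hold automatically. By Corollary~\ref{corollary:pull-out-norm}, $\langle b,b'\rangle=\langle a'b,a'b'\rangle=\langle a'b,ab\rangle=\langle a',a\rangle$ and $\langle b,a'\rangle=\langle bb',ab\rangle=-\langle b',a\rangle$. The determinant vanishes because $\langle a,ab\rangle=\langle b,ab\rangle=0$.
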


\begin{proof}
We may assume without loss of generality that $n(a) = n(b) = n(a') = n(b') = 1$. By definition, $d_{\Gamma_C(\mathbb{S})}((a,b),(a',b')) \leq 2$ if and only if there exists a path of length at most two between $(a,b)$ and $(a',b')$ in $\Gamma_C(\mathbb{S})$. Clearly, it is sufficient to consider only those elements between $(a,b)$ and $(a',b')$ which are nonzero but have zero trace. By Lemma~\ref{lemma:A_n-commutativity-through-orthogonality}, the element $(a,b)$ is adjacent only to elements of the form $\alpha (a,b) + (c,d)$, where $\alpha \in \mathbb{R}$ and $(c,d) \in O_{\mathbb{S}}((a,b))$. It follows from Lemma~\ref{lemma:sedenions-annihilators} that the last condition is equivalent to $c \in \spn(1, a, b, ab)^{\perp}$ and $d = -(ab)c$. Similarly, $(a',b')$ is adjacent to elements of the form $\beta (a',b') + (c',d')$, where $\beta \in \mathbb{R}$, $c' \in \spn(1, a', b', a'b')^{\perp}$ and $d' = -(a'b')c'$. 

We need the equality $\alpha (a,b) + (c,d) = \beta (a',b') + (c',d')$ to be satisfied, which is equivalent to the following system:
\begin{equation} \label{equation:equal-vertices}
\begin{cases}
    c' = c + \alpha a - \beta a',\\
    d' = d + \alpha b - \beta b'.
\end{cases}
\end{equation}
Then $(a,b) \leftrightarrow \alpha (a,b) + (c,d) = \beta (a',b') + (c',d') \leftrightarrow (a',b')$ is the desired path of length at most two. Note that, if its middle element is proportional either to $(a,b)$ or to $(a',b')$, then this path has two subsequent equal vertices, but $\Gamma_C(\mathbb{S})$ has no loops, so it is actually a path of length at most one. Substituting expressions for $d$ and $d'$ into system~\eqref{equation:equal-vertices}, we obtain
\begin{equation} \label{equation:condition-on-c}
\begin{aligned}
    -(ab)c + \alpha b - \beta b' = -(a'b')c' &= -(a'b')(c + \alpha a - \beta a') \\
    &= -(a'b')c - \alpha(a'b')a + \beta (a'b')a'.
\end{aligned}
\end{equation}
By Lemma~\ref{lemma:A_n-anticomm} and Proposition~\ref{proposition:sedenions-orthonormal-system}, the elements $a'$ and $b'$ anticommute, hence $(a'b')a' = -(b'a')a' = -b'(a')^2 = n(a')b' = b'$. We rearrange Eq.~\eqref{equation:condition-on-c}, so that all summands on the left-hand side contain the element $c$:
\begin{equation} \label{equation:condition-on-c-2}
(a'b' - ab)c = 2\beta b' - \alpha b - \alpha (a'b')a.
\end{equation}

Consider now two cases. If $ab = a'b'$, then we set $\alpha = \beta = 0$ and choose an arbitrary nonzero $c = c' \in \spn(1, a, b, a', b', ab)^{\perp}$. Then $(a,b) \leftrightarrow (c, -(ab)c) = (c, -(a'b')c) \leftrightarrow (a',b')$ is the desired path of length two. By Proposition~\ref{proposition:sedenions-orthonormal-system}, $\langle a, ab \rangle = \langle b, ab \rangle = \langle a', a'b' \rangle = \langle b', a'b' \rangle = 0$, so in this case the determinant in system~\eqref{equation:length-two-path} is equal to zero. Besides, by Corollary~\ref{corollary:pull-out-norm}, 
\begin{align*}
\langle a, a' \rangle - \langle b, b' \rangle &= \langle a, a' \rangle - \langle a'b, a'b' \rangle \\
&= \langle a, a' \rangle - \langle a'b, ab \rangle \\
&= \langle a, a' \rangle - n(b) \langle a', a \rangle = 0,\\
\langle a, b' \rangle + \langle b, a' \rangle &= \langle a, b' \rangle + \langle bb', a'b' \rangle \\
&= \langle a, b' \rangle + \langle bb', ab \rangle \\
&= \langle a, b' \rangle - \langle bb', ba \rangle \\
&= \langle a, b' \rangle - n(b) \langle b', a \rangle = 0.
\end{align*}
Therefore, in this case all equations of system~\eqref{equation:length-two-path} are satisfied, and the statement of the lemma holds true.

We assume further that $ab \neq a'b'$. Since~$\mathbb{O}$ is a division algebra, the element~$c$ is uniquely determined by the parameters $\alpha, \beta \in \mathbb{R}$ from Eq.~\eqref{equation:condition-on-c-2}. Clearly, $\alpha = \beta = 0$ implies that $c = 0$, so $\alpha (a,b) + (c,d) = (0,0)$. Thus we have to determine the conditions under which there exists a nonzero pair of parameters $(\alpha, \beta)$ such that $c \in \spn(1, a, b, ab)^{\perp}$ and $c' \in \spn(1, a', b', a'b')^{\perp}$.

Note that $n(a'b' - ab) \neq 0$, so $\langle c, x \rangle = 0$ if and only if $n(a'b' - ab) \langle c, x \rangle = 0$. By Corollary~\ref{corollary:pull-out-norm}, 
$$
n(a'b' - ab) \langle c, x \rangle = \langle (a'b' - ab)c, (a'b' - ab)x \rangle = \langle 2\beta b' - \alpha b - \alpha (a'b')a, (a'b' - ab)x \rangle.
$$
Hence we have $c \in \spn(1, a, b, ab)^{\perp}$ if and only if the equality $\langle 2\beta b' - \alpha b - \alpha (a'b')a, (a'b' - ab)x \rangle = 0$ holds for all $x \in \{ 1, a, b, ab \}$. 

Our further computations are based on Lemma~\ref{lemma:inner-product-movement} and Corollary~\ref{corollary:pull-out-norm}. Besides, we will need the fact that, by Proposition~\ref{proposition:sedenions-orthonormal-system}, the elements $1, a, b, ab$ (and, similarly, $1, a', b', a'b'$) form an orthonormal system with respect to the inner product $\langle \cdot, \cdot \rangle$. It then follows from Lemma~\ref{lemma:A_n-anticomm} that $a, b, ab$ (and $a', b', a'b'$) anticommute pairwise.

\begin{itemize}
    \item Let $x = 1$. Note that
    $$
    \langle (a'b')a, ab \rangle = - \langle (a'b')a, ba \rangle = -n(a) \langle a'b', b \rangle = - \langle b, a'b' \rangle.
    $$
    Then
    \begin{align*}
    0 &{} = \langle 2\beta b' - \alpha b - \alpha (a'b')a, (a'b' - ab)x \rangle \\
    &{} = \langle 2\beta b' - \alpha b - \alpha (a'b')a, a'b' - ab \rangle \\
    &{} = - 2\beta \langle b', ab \rangle - 2\alpha \langle b, a'b' \rangle - \alpha n(a'b') \langle a, 1 \rangle \\
    &{} = \mathrlap{-2(\alpha \langle b, a'b' \rangle + \beta \langle b', ab \rangle).} \phantom{2\beta \langle b'a, b'a' \rangle - 2\beta \langle b', b \rangle + \alpha \langle ba, a'b' \rangle - \alpha \langle a'b', ba \rangle}
    \end{align*}
    \item Let $x = a$. Then
    \begin{align*}
    0 &{} = \langle 2\beta b' - \alpha b - \alpha (a'b')a, (a'b' - ab)x \rangle \\
    &{} = \langle 2\beta b' - \alpha b - \alpha (a'b')a, (a'b')a - b \rangle \\
    &{} = 2\beta \langle b', (a'b')a \rangle - 2\beta \langle b', b \rangle - \alpha \langle b, (a'b')a \rangle \\
    &{} + \alpha n(b) - \alpha n(a'b') n(a) + \alpha \langle (a'b')a, b \rangle \\
    &{} = 2\beta \langle b'a, b'a' \rangle - 2\beta \langle b', b \rangle + \alpha \langle ba, a'b' \rangle - \alpha \langle a'b', ba \rangle \\
    &{} = 2\beta (\langle a, a' \rangle - \langle b, b' \rangle).
    \end{align*}
    \item Let $x = b$. Then
    \begin{align*}
    0 &{} = \langle 2\beta b' - \alpha b - \alpha (a'b')a, (a'b' - ab)x \rangle \\
    &{} = \langle 2\beta b' - \alpha b - \alpha (a'b')a, (a'b')b + a \rangle \\
    &{} = 2\beta \langle b', (a'b')b \rangle + 2\beta \langle b', a \rangle - \alpha n(b) \langle 1, a'b' \rangle \\
    &{} - \alpha n(a'b') \langle a, b \rangle - \alpha n(a) \langle a'b',1 \rangle \\
    &{} = 2\beta \langle b'b, b'a' \rangle + 2\beta \langle b', a \rangle \\
    &{} = \mathrlap{2\beta (\langle a', b \rangle + \langle a, b' \rangle).} \phantom{2\beta \langle b'a, b'a' \rangle - 2\beta \langle b', b \rangle + \alpha \langle ba, a'b' \rangle - \alpha \langle a'b', ba \rangle}
    \end{align*}
    \item Let $x = ab$. Then
    \begin{align*}
    0 &{} = \langle 2\beta b' - \alpha b - \alpha (a'b')a, (a'b' - ab)x \rangle \\
    &{} = \langle 2\beta b' - \alpha b - \alpha (a'b')a, (a'b')(ab) + 1 \rangle \\
    &{} = 2\beta \langle b', (a'b')(ab) \rangle - \alpha \langle b, (a'b')(ab) \rangle \\
    &{} - \alpha n(a'b') \langle a, ab \rangle - \alpha \langle (a'b')a, 1 \rangle\\
    &{} = 2\beta \langle b'(ab), b'a' \rangle + \alpha \langle (a'b')b, ab \rangle + \alpha \langle a'b', a \rangle \\
    &{} = \mathrlap{2(\alpha \langle a, a'b' \rangle + \beta \langle a', ab \rangle).} \phantom{2\beta \langle b'a, b'a' \rangle - 2\beta \langle b', b \rangle + \alpha \langle ba, a'b' \rangle - \alpha \langle a'b', ba \rangle}
    \end{align*}
\end{itemize}

Since the roles of $(a,b), (c,d)$ and $(a',b'), (c',d')$ are interchangeable, the condition $c' \in \spn(1, a', b', a'b')^{\perp}$ can be obtained from the above equations by substituting $\alpha \leftrightarrow \beta$, $a \leftrightarrow a'$, and $b \leftrightarrow b'$. In total we have the following six equalities:
$$
\begin{cases}
\alpha (\langle a, a' \rangle - \langle b, b' \rangle) = 0,\\
\beta (\langle a, a' \rangle - \langle b, b' \rangle) = 0,\\
\alpha (\langle a', b \rangle + \langle a, b' \rangle) = 0,\\
\beta (\langle a', b \rangle + \langle a, b' \rangle) = 0,\\
\alpha \langle b, a'b' \rangle + \beta \langle b', ab \rangle = 0,\\
\alpha \langle a, a'b' \rangle + \beta \langle a', ab \rangle = 0.
\end{cases}
$$
Since at least one of the parameters $\alpha, \beta$ is nonzero, the first four equations are equivalent to $\langle a, a' \rangle - \langle b, b' \rangle = \langle a', b \rangle + \langle a, b' \rangle = 0$. Then there exists a nonzero pair $(\alpha, \beta)$ which satisfies the last two equations if and only if the determinant of their coefficient matrix is equal to zero.
\end{proof}

\begin{theorem} \label{theorem:diameter-3}
The diameter of $\Gamma_C^Z(\mathbb{S})$ equals $3$.    
\end{theorem}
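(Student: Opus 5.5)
The lower bound $d(\Gamma_C^Z(\mathbb{S})) \geq 3$ is Proposition~\ref{proposition:lower-bound}, so it remains to show that any two vertices $x=(a,b)$ and $y=(a',b')$ of $\Gamma_C^Z(\mathbb{S})$ are at distance at most $3$. Each vertex is represented by a zero divisor, and we may normalize $n(a)=n(b)=n(a')=n(b')=1$. The plan is to find a neighbour $z$ of $x$ that is itself a zero divisor and is at distance at most two from $y$. Then Lemma~\ref{lemma:length-two-path} finishes the argument: $x \leftrightarrow z$ followed by a path of length at most $2$ from $z$ to $y$.

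By Lemma~\ref{lemma:A_n-commutativity-through-orthogonality}(2) and Lemma~\ref{lemma:sedenions-annihilators}, every element of $O_{\mathbb{S}}((a,b))\setminus\{0\}$ is adjacent to $x$. Such an element has the form $z=(c,-(ab)c)$ with $c \perp 1,a,b,ab$, so $c$ ranges over a $4$-dimensional real space $V$. Each such $z$ is a zero divisor, since it is annihilated by $(a,b)$. By Proposition~\ref{proposition:sedenions-zero-divisor-pairs}, the second component $d=-(ab)c$ is orthogonal to $1$ and $c$ and has norm $n(c)$, so the hypotheses of Lemma~\ref{lemma:length-two-path} hold for the pair $z$, $y$. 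I would substitute $z=(c,d)$ with $d=-(ab)c$ into system~\eqref{equation:length-two-path}:
\begin{equation*}
\langle c,a'\rangle-\langle d,b'\rangle=0,\qquad \langle c,b'\rangle+\langle d,a'\rangle=0 .
\end{equation*}
Using Lemma~\ref{lemma:inner-product-movement} to move $ab$ across, $\langle d,b'\rangle=-\langle (ab)c,b'\rangle=\langle c,(ab)b'\rangle$ (since $\overline{ab}=-ab$). So both equations are linear in $c$, of the form $\langle c,u_1\rangle=\langle c,u_2\rangle=0$ with fixed octonions $u_1,u_2$.

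The third condition is the determinant
\begin{equation*}
\langle c,a'b'\rangle\langle b',cd\rangle-\langle d,a'b'\rangle\langle a',cd\rangle=0 ,
\end{equation*}
and it is not linear in $c$, because $cd=-c((ab)c)$ is quadratic in $c$. The key simplification is that $c$ and $ab$ anticommute (Lemma~\ref{lemma:A_n-anticomm}), so in the alternative algebra $\mathbb{O}$ we get $cd=-c((ab)c)=(ab)c^2=-n(c)\,ab$. The determinant therefore becomes $-n(c)$ times a bilinear expression in $c$:
\begin{equation*}
\langle c,a'b'\rangle\langle b',ab\rangle-\langle c,(ab)(a'b')\rangle\langle a',ab\rangle .
\end{equation*}
Up to sign bookkeeping, this is again a single linear condition $\langle c,u_3\rangle=0$. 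Altogether we get at most three homogeneous linear equations on $c\in V$, and $\dim V=4$, so a nonzero solution $c$ exists. This is the ``four linear equations in five variables'' mentioned in the introduction: the variables are $c\in\mathbb{R}^8$ restricted by the constraints defining $V$, plus the determinant condition.

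The resulting vertex $z$ is adjacent to $x$ and satisfies $d(z,y)\le 2$, hence $d(x,y)\le 3$. Degenerate situations are harmless. If $z$ coincides with $y$ or with a vertex on the path, the path only gets shorter. The vertices $z$ are distinct from $x$ because $c\perp a$.

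The main obstacle is verifying the identity $c((ab)c)=-n(c)\,ab$ and the sign bookkeeping that turns the determinant into a linear condition. That step needs care with the alternative laws in $\mathbb{O}$ and with Lemma~\ref{lemma:inner-product-movement}.
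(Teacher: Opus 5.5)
Your proof is correct. It shares the paper's skeleton: an intermediate zero divisor adjacent to one endpoint and within distance two of the other, found by a dimension count and certified by Lemma~\ref{lemma:length-two-path}. The key step, taming the cubic determinant condition, is done differently.

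In the paper the endpoints are $(a,b)$ and $(c,d)$, and the intermediate $(a',b')$ is sought in the $5$-dimensional space $\Im(C_{\mathbb{S}}((c,d)))=\mathbb{R}(c,d)\oplus O_{\mathbb{S}}((c,d))$ on the target's side. The fixed endpoint $(a,b)$ stays in the first slot of the lemma, so $ab$ is a constant. The determinant is then simply killed by imposing $\langle a',ab\rangle=\langle b',ab\rangle=0$. The whole requirement becomes $(a',b')\perp(a,-b),(b,a),(ab,0),(0,ab)$: four linear conditions on a $5$-dimensional space, with no octonion algebra beyond what the lemma already contains.

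You search instead in the $4$-dimensional $O_{\mathbb{S}}((a,b))$ and put the varying element in the first slot. The identity $cd=-n(c)\,ab$ holds for all $(c,d)\in O_{\mathbb{S}}((a,b))$, since $c\in\Anc_{\mathbb{O}}(ab)$ and $\mathbb{O}$ is alternative. So the product of the components is constant up to a nonzero scalar, and the determinant costs only one linear condition. This exactly compensates for the smaller search space.

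Your route gets that $z$ is a zero divisor for free, because it is annihilated by $(a,b)$. The paper must know that its $(a',b')$ lies in $Z(\mathbb{S})$; this is true, e.g.\ by Corollary~\ref{corollary:small-commutativity-component}, but it is not stated. The paper's route, in turn, never needs to evaluate the determinant at all.

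Two cosmetic slips:
\begin{enumerate}[(1)]
\item Your last paragraph states $c((ab)c)=-n(c)\,ab$, but it should read $c((ab)c)=n(c)\,ab$. Your displayed chain $cd=-c((ab)c)=-n(c)\,ab$ is the right one, and the sign is irrelevant to the argument.
\item The ``four linear equations in five variables'' from the introduction is the paper's count, not yours; you have three equations on a $4$-dimensional space.
\end{enumerate}
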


\begin{proof}
By Proposition~\ref{proposition:lower-bound}, the diameter of $\Gamma_C^Z(\mathbb{S})$ is at least three. Thus it is sufficient to show that there exists a path of length at most three in $\Gamma_C(\mathbb{S})$ between any two elements $(a,b), (c,d) \in Z(\mathbb{S})$. To prove this, we will find an element $(a',b') \in Z(\mathbb{S})$ such that
\begin{enumerate}[{\rm (1)}]
    \item there exists a path of length at most two between $(a,b)$ and $(a',b')$,
    \item $(a',b')$ and $(c,d)$ are either linearly dependent 
    or adjacent, i.e., $(a',b') \in C_{\mathbb{S}}((c,d))$.
\end{enumerate}

Note that, if $\langle a', ab \rangle = \langle b', ab \rangle = 0$, then the determinant in Lemma~\ref{lemma:length-two-path} is equal to zero. Thus system~\eqref{equation:length-two-path} holds automatically if the following system of equations is satisfied:
\begin{equation} \label{equation:length-two-path-simple}
\begin{cases}
\langle a, a' \rangle - \langle b, b' \rangle = 0,\\
\langle a, b' \rangle + \langle b, a' \rangle = 0,\\
\langle a', ab \rangle = 0,\\
\langle b', ab \rangle = 0.
\end{cases}
\end{equation}
It follows from Proposition~\ref{proposition:real-cayley-dickson-properties} that $\langle (x,y), (z,w) \rangle = \langle x, z \rangle + \langle y, w \rangle$ for all $(x,y), (z,w) \in \mathbb{S}$. Thus system~\eqref{equation:length-two-path-simple} is satisfied if and only if
$$
(a',b') \in \spn((a,-b), (b,a), (ab,0), (0,ab))^{\perp}.
$$

Since $(a',b') \in Z(\mathbb{S})$, we have $t((a',b')) = 0$. Hence, by Lemma~\ref{lemma:A_n-commutativity-through-orthogonality}(2), condition~(2) is equivalent to the fact that $(a',b') \in \Im(C_{\mathbb{S}}((c,d))) = \mathbb{R}(c,d) \oplus O_{\mathbb{S}}((c,d))$. But $\dim \Im(C_{\mathbb{S}}((c,d))) = 5$, so there exists a nonzero element
$$
(a',b') \in \Im(C_{\mathbb{S}}((c,d))) \cap \spn((a,-b), (b,a), (ab,0), (0,ab))^{\perp}.
$$
This element is the desired one.
\end{proof}

\medskip

The author is grateful to her scientific advisor Professor Alexander E. Guterman for posing the
problem and fruitful discussions.

\end{document}